\newtheorem{theorem}{Theorem}
\newtheorem{proposition}[theorem]{Proposition}
\newtheorem{lemma}[theorem]{Lemma}
\theoremstyle{remark}
\newtheorem{remark}[theorem]{Remark}
\definecolor{light-gray1}{gray}{0.90}
\definecolor{light-gray2}{gray}{0.80}
\definecolor{light-gray3}{gray}{0.60}
\numberwithin{equation}{section}
\numberwithin{theorem}{section}
\numberwithin{table}{section}
\numberwithin{figure}{section}
\title[On the NLS approximation for the NLKG]{On the NLS approximation for the nonlinear Klein-Gordon equation}
\date{\today}
\author[S. Hong]{Seokchang Hong}
\address{Department of Mathematics, Chung-Ang University, Seoul 06974, Korea}
\email{seokchangh11@cau.ac.kr}
\author[Y. Hong]{Younghun Hong}
\address{Department of Mathematics, Chung-Ang University, Seoul 06974, Korea}
\email{yhhong@cau.ac.kr}
\begin{document}
\maketitle

\begin{abstract}
In this paper, developing a new approach based on Fourier analysis methods for dispersive PDEs, we establish a low regularity NLS approximation for the one-dimensional cubic Klein-Gordon equation. Our main result includes energy class solutions which are formally asymptotically in $L^2(\mathbb{R})$. A precise rate of convergence is also obtained assuming more regularity. 
\end{abstract}

\section{Introduction}

The nonlinear Schr\"odinger equation (NLS) is a universal model describing the envelope dynamics of slowly modulating small amplitude wave packets. By multi-scaling analysis, the NLS is derived from various Hamiltonian systems, including the the Korteweg-de Vries equation \cite{Schneider1,Schneider3}, the Boussinesq equation \cite{DullSchneider}, water wave problems \cite{Zakharov, HasiOno, CSS, SW1, SW2, SW3, CGS, TW, MasmoudiNakanishi, Totz, DSW, IfrimTataru, Dull}, nonlinear optics \cite{DLPSW} and discrete models \cite{SH, Schneider2, KLS, HY}. For extensive references on the universality of the NLS, we refer to the books by Schneider and Uecker \cite{SU} and by Sulem and Sulem \cite{SuSu}.

The purpose of this article is to develop a new approach to justify the NLS approximation based on Fourier analysis methods for dispersive PDEs. In this way, we aim to include a larger class of solutions compared to the previously known dynamical system approach as well as exploring a possibility of extending the interval of approximation using the conservation laws.

To make our discussion concrete, we restrict ourselves to the one-dimensional nonlinear Klein-Gordon equation (NLKG), that is, the simplest expository textbook example \cite[Chapter 11]{SU}, given by 
\begin{equation}\label{NLKG}
\partial_t^2u-\partial_x^2u+u+u^3=0,
\end{equation}
where $u=u(t,x):\mathbb{R}\times\mathbb{R}\to\mathbb{R}$. For a small parameter $\epsilon>0$, we seek a solution of the form 
\begin{equation}\label{u ansatz}
u_\epsilon(t,x)=\epsilon A_\epsilon(\epsilon^2t,\epsilon (x-c_gt))e^{i(kx-\omega t)}+c.c.,
\end{equation}
where $A_\epsilon=A_\epsilon(t,x): \mathbb{R}\times\mathbb{R}\to\mathbb{C}$ is the amplitude function and \textit{c.c.} stands for the complex conjugate of the former term. Here, the dispersion relation and the group velocity are chosen respectively as
$$\omega=\langle k\rangle\quad\textup{and}\quad c_g=\omega'(k)=\frac{k}{\langle k\rangle},$$
where $\langle\cdot\rangle=\sqrt{1+\cdot^2}$ denotes the Japanese bracket, to cancel out $O(\epsilon)$- and $O(\epsilon^2)$-terms. Then, from the next $O(\epsilon^3)$-order terms, the cubic NLS  
\begin{equation}\label{NLS0}
2i\omega\partial_t\psi^{\textup{(NLS)}}+(1-c_g^2)\partial_x^2\psi^{\textup{(NLS)}}-3 |\psi^{\textup{(NLS)}}|^2\psi^{\textup{(NLS)}}=0,
\end{equation}
where $\psi^{\textup{(NLS)}}=\psi^{\textup{(NLS)}}(t,x): \mathbb{R}\times\mathbb{R}\to\mathbb{C}$, is derived so that the sum of two wave packets
$$\epsilon \psi^{\textup{(NLS)}}(\epsilon^2t,\epsilon (x-c_gt))e^{i(kx-\omega t)}+c.c.$$
approximates the NLKG flow. For a more detailed formal derivation and a rigorous proof for sufficiently regular flows, we refer to \cite[Chapter 11]{SU}.

In this paper, we include a larger class of solutions, namely the energy class, for the NLS approximation. Note that solutions to the NLKG \eqref{NLKG} preserve the energy 
$$\mathcal{E}(u)=\frac{1}{2}\left(\|\partial_x u\|_{L^2(\mathbb{R})}^2+\|\partial_t u\|_{L^2(\mathbb{R})}^2+\|u\|_{L^2(\mathbb{R})}^2\right)+\frac{1}{4}\|u\|_{L^4(\mathbb{R})}^4.$$
By the $\epsilon$-scaling in the ansatz \eqref{u ansatz}, it is natural to introduce the rescaled Sobolev space $H_\epsilon^s(\mathbb{R})$, for $s\in\mathbb{R}$ and $\epsilon\in(0,1]$, equipped with the norm
$$\|u\|_{H_\epsilon^s(\mathbb{R})}=\|\langle\epsilon D\rangle^su\|_{L^2(\mathbb{R})},$$
where $D=-i\partial_x$, and define $m(D)$ as the Fourier multiplier operator with symbol $m(\xi)$, i.e., $\widehat{m(D)f}(\xi)=m(\xi)\hat{f}(\xi)$. In particular, we call $H_\epsilon^1(\mathbb{R})$ the energy class.

Throughout the article, we assume that initial data satisfies 
\begin{equation}\tag{\textbf{\textup{H1}}}
\sup_{\epsilon\in(0,1]}\|\psi_{\epsilon,0}\|_{H_\epsilon^1(\mathbb{R})}\leq R<\infty
\end{equation}
and
 \begin{equation}\tag{\textbf{\textup{H2}}}
\lim_{\epsilon\to 0}\|\mathbf{1}_{|D|>\delta\epsilon^{-1/3}}\psi_{\epsilon,0}\|_{H_\epsilon^1(\mathbb{R})}=0\quad\textup{for any }\delta>0,
\end{equation}
where $\mathbf{1}_{|D|>N}$ is a high frequency cut-off with symbol $\mathbf{1}_{|\xi|>N}$. 

\begin{remark}\label{initial data remark}
$(i)$ Some regularity must be imposed on initial data, because the NLKG \eqref{NLKG} is well-posed in $H^s(\mathbb{R})$ only if $s\geq\frac{1}{2}$ (see \cite[Appendix D]{KLR}). In \textbf{\textup{(H1)}}, the regularity $s$ is chosen to be one, but we may say it is asymptotically zero due to the formal norm convergence $\|\cdot\|_{H_\epsilon^1(\mathbb{R})}\to \|\cdot\|_{L^2(\mathbb{R})}$ as $\epsilon\to 0$ even though $H_\epsilon^1(\mathbb{R})=H^1(\mathbb{R})$ as sets.\\
$(ii)$ The energy class solutions are considered with a hope to find potential applications of the energy conservation law, for instance, extending the interval of approximation. Certainly, there is a room to reduce the regularity further in the assumption \textbf{\textup{(H1)}}, but such low regularities will not be pursued here to avoid additional technical complications.\\
$(iii)$ \textbf{\textup{(H2)}} ensures that initial data $\epsilon\psi_0(\epsilon x)e^{ikx}+c.c.$ is tightly localized at two frequencies $\pm k$, with $|\xi\mp k|\ll \epsilon^{-\frac{2}{3}}$, and thus the two-wave packet structure \eqref{u ansatz} remains for long time.
\end{remark}

Our first main result asserts that the NLS approximation is valid under the above two assumptions.

\begin{theorem}[NLS approximation to the NLKG]\label{main theorem}
We assume \textbf{\textup{(H1)}}-\textbf{\textup{(H2)}} for $\{\psi_{\epsilon,0}\}_{\epsilon\in(0,1]}$. For $\epsilon\in (0,1]$, let $u_\epsilon(t)\in C_t(\mathbb{R}; H^1(\mathbb{R}))$ be the solution to the NLKG \eqref{NLKG} with initial data 
\begin{equation}\label{main theorem initial data}
\big(u_\epsilon(0), \partial_tu_\epsilon(0) \big)=\big(\epsilon\psi_{\epsilon,0}(\epsilon\cdot)e^{ikx}+c.c., \langle D\rangle (-i\epsilon\psi_{\epsilon,0}(\epsilon\cdot)e^{ikx}+c.c.)\big)\in H^1(\mathbb{R})\times L^2(\mathbb{R}),
\end{equation}
and let $\psi_\epsilon^{(\textup{NLS})}(t)\in C_t(\mathbb{R}; H_\epsilon^1(\mathbb{R}))$ be the solution to the NLS \eqref{NLS0} with initial data $\psi_{\epsilon,0}\in H_\epsilon^1(\mathbb{R})$. Then, there exists $T>0$, independent of $\epsilon\in(0,1]$, such that
\begin{equation}\label{main theorem NLS approximation}
\lim_{\epsilon\to0}\frac{1}{\sqrt{\epsilon}}\left\|u_\epsilon(t,x)-\left(\epsilon \psi_\epsilon^{(\textup{NLS})}(\epsilon^2 t, \epsilon (x-c_gt))e^{i(kx-\omega t)}+c.c.\right)\right\|_{C_t([-\frac{T}{\epsilon^2},\frac{T}{\epsilon^2}];H^1(\mathbb{R}))}=0.
\end{equation}
\end{theorem}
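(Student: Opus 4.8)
\medskip
\noindent\textbf{Proof strategy.}
The idea is to descend to the first-order ``half-Klein--Gordon'' flow, rescale to the NLS profile, and then compare with $\psi_\epsilon^{(\textup{NLS})}$ perturbatively, using the conserved energy of \eqref{NLKG} as the only $\epsilon$-uniform a priori bound and the Strichartz theory of the ($L^2$-subcritical) one-dimensional cubic NLS to close. I begin by setting $u_+:=\tfrac12\bigl(u_\epsilon+i\langle D\rangle^{-1}\partial_t u_\epsilon\bigr)$, so that $u_\epsilon=u_++\overline{u_+}$ and
\[
i\partial_t u_+=\langle D\rangle u_++\tfrac12\langle D\rangle^{-1}\bigl(u_++\overline{u_+}\bigr)^3,
\]
the data \eqref{main theorem initial data} having been chosen precisely so that $u_+(0)=\epsilon\psi_{\epsilon,0}(\epsilon\,\cdot\,)e^{ikx}$. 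It thus suffices to show $\|u_+(t)-\epsilon\psi_\epsilon^{(\textup{NLS})}(\epsilon^2t,\epsilon(x-c_gt))e^{i(kx-\omega t)}\|_{H^1(\R)}=o(\sqrt\epsilon)$ uniformly for $|t|\le T\epsilon^{-2}$. Writing $u_+(t,x)=\epsilon\Phi_\epsilon(\epsilon^2t,\epsilon(x-c_gt))e^{i(kx-\omega t)}$ and letting $\eta$ be dual to the slow space variable (physical frequency $\xi=k+\epsilon\eta$), the equation for $u_+$ turns, in Fourier, into
\[
i\partial_\tau\widehat{\Phi_\epsilon}(\tau,\eta)=\mathcal L_\epsilon(\eta)\,\widehat{\Phi_\epsilon}+\tfrac{3}{2\omega}\widehat{|\Phi_\epsilon|^2\Phi_\epsilon}+\widehat{\mathcal R_\epsilon},\qquad \mathcal L_\epsilon(\eta):=\tfrac1{\epsilon^2}\bigl(\langle k+\epsilon\eta\rangle-\omega-c_g\epsilon\eta\bigr),
\]
and, using $\omega=\langle k\rangle$, $c_g=k/\langle k\rangle$, $\omega''(k)=\langle k\rangle^{-3}$, one has $\mathcal L_\epsilon(\eta)\to\tfrac{1-c_g^2}{2\omega}\eta^2$, so that the first two terms on the right are exactly the rescaled NLS \eqref{NLS0}. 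The remainder $\mathcal R_\epsilon$ collects: \emph{(a)} the dispersion mismatch $\mathcal L_\epsilon(\eta)-\tfrac{1-c_g^2}{2\omega}\eta^2$, an $O(\epsilon|\eta|^3)$ multiplier for $\epsilon|\eta|\lesssim1$; \emph{(b)} the correction $\langle k+\epsilon\eta\rangle^{-1}-\omega^{-1}=O(\epsilon|\eta|)$ in front of the resonant cubic; and \emph{(c)} the non-resonant cubic interactions --- the parts of $(u_++\overline{u_+})^3$ supported near frequencies $\pm3k$ and $-k$ --- which in the slow time carry a phase $e^{i\Omega_0\tau/\epsilon^2}$ with $\Omega_0$ of the form $\langle3k\rangle-3\langle k\rangle\ne0$, bounded away from $0$ uniformly in $\epsilon$.

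Next I record the a priori bounds. Conservation of $\mathcal E(u_\epsilon)$, together with $\|u_+\|_{H^1}\lesssim\|u_\epsilon\|_{H^1}+\|\partial_t u_\epsilon\|_{L^2}\lesssim\mathcal E(u_\epsilon)^{1/2}$ and $\mathcal E(u_\epsilon(0))\lesssim_{k,R}\epsilon$, yields $\|u_+(t)\|_{H^1(\R)}\lesssim\sqrt\epsilon$, i.e.\ $\|\Phi_\epsilon(\tau)\|_{L^2}+\epsilon\|\partial_y\Phi_\epsilon(\tau)\|_{L^2}\lesssim_k R$ uniformly in $\tau$ and $\epsilon\in(0,1]$; likewise $\|\psi_\epsilon^{(\textup{NLS})}(\tau)\|_{L^2}=\|\psi_{\epsilon,0}\|_{L^2}\le R$ by mass conservation, with the rescaled derivative bounded by the conserved NLS energy. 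By $L^2$-subcriticality of one-dimensional cubic NLS there is $T=T(R)>0$, independent of $\epsilon$, on which $\|\Phi_\epsilon\|_{L^6_{\tau,y}}+\|\psi_\epsilon^{(\textup{NLS})}\|_{L^6_{\tau,y}}\lesssim_R1$. Now put $E_\epsilon:=\Phi_\epsilon-\psi_\epsilon^{(\textup{NLS})}$; since $\Phi_\epsilon(0)=\psi_{\epsilon,0}=\psi_\epsilon^{(\textup{NLS})}(0)$ one has $E_\epsilon(0)=0$, so no initial error enters. Subtracting the two equations, using Duhamel for the unitary group $e^{-i\tau\mathcal L_\epsilon}$ and applying Strichartz estimates, one controls $\|E_\epsilon\|_{C_\tau L^2_y\cap L^6_{\tau,y}([-T,T])}$ by $\bigl(\|\Phi_\epsilon\|_{L^6}^2+\|\psi_\epsilon^{(\textup{NLS})}\|_{L^6}^2\bigr)\|E_\epsilon\|_{L^6}$, absorbed for $T$ small and chained across $[-T,T]$, plus the contributions of $\mathcal R_\epsilon$ and of the group mismatch $\bigl(e^{-i\tau\mathcal L_\epsilon}-e^{i\tau\frac{1-c_g^2}{2\omega}\partial_y^2}\bigr)\psi_\epsilon^{(\textup{NLS})}$. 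Everything reduces to showing these last two are $o_\epsilon(1)$ in $H^1_\epsilon$; the passage from $L^2$ to the full $H^1_\epsilon$ norm --- the $\epsilon\partial_y$-component of \eqref{main theorem NLS approximation} --- then follows by repeating the estimate with one rescaled derivative.

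This last step is the crux, and it is where \textbf{\textup{(H2)}} enters. Split every profile at frequency $\delta\epsilon^{-1/3}$: on $|\eta|\le\delta\epsilon^{-1/3}$ the multipliers in \emph{(a)}, \emph{(b)} and the group mismatch are $O(\delta^3)$ and $O(\delta)$, so they contribute $O(\delta)$ over $[-T,T]$. For $|\eta|>\delta\epsilon^{-1/3}$ one proves a propagation-of-localization estimate: because the cubic Picard iteration for one-dimensional cubic NLS spreads the essential frequency support only by a bounded factor per step on a fixed interval, \textbf{\textup{(H2)}} --- applied at a threshold $\delta'\epsilon^{-1/3}$ with $\delta'$ chosen small relative to $\delta,R,T$ --- forces $\|\mathbf 1_{|D|>\delta\epsilon^{-1/3}}\Phi_\epsilon(\tau)\|_{H^1_\epsilon}+\|\mathbf 1_{|D|>\delta\epsilon^{-1/3}}\psi_\epsilon^{(\textup{NLS})}(\tau)\|_{H^1_\epsilon}\to0$ uniformly on $[-T,T]$. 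For \emph{(c)} I apply one normal-form transformation: integrating by parts in $\tau$ against $e^{i\Omega_0\tau/\epsilon^2}$ gains a factor $\epsilon^2/\Omega_0$, so the boundary terms tend to $0$ in $L^2$ and the leftover is $\lesssim\epsilon^2\int_{-T}^{T}\|\partial_\sigma(\Phi_\epsilon^3)\|_{L^2}\,d\sigma$; here $\|\partial_\sigma\Phi_\epsilon\|_{L^2}\lesssim\|\mathcal L_\epsilon\Phi_\epsilon\|_{L^2}+\cdots$, and on the band $|\eta|\lesssim\delta\epsilon^{-1/3}$ the symbol $\mathcal L_\epsilon$ is only $O(\epsilon^{-2/3})$ rather than $O(\epsilon^{-2})$ --- again by \textbf{\textup{(H2)}} --- which makes the leftover $O(\epsilon)$ and, in particular, guarantees that the slow profile does not secretly resonate on the $\Omega_0\epsilon^{-2}$ time scale (so no cancellation of the oscillation occurs). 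Collecting these bounds gives $\limsup_{\epsilon\to0}\|E_\epsilon\|_{C_\tau([-T,T];H^1_\epsilon(\R))}=O(\delta)$, and letting $\delta\to0$, then undoing the rescaling and the first reduction, yields \eqref{main theorem NLS approximation}.

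I expect the genuine difficulty to lie entirely in this low-regularity error analysis. The higher-order dispersion \emph{(a)} is one power of $\epsilon$ short of being summable over the long interval $[-T\epsilon^{-2},T\epsilon^{-2}]$ --- which is precisely why the threshold $\epsilon^{-1/3}$ in \textbf{\textup{(H2)}} is the sharp one --- and the non-resonant interactions \emph{(c)}, which a naive estimate produces at size $O(1)$ after the $\epsilon^{-1/2}$ normalization, cannot be removed by the energy method: one must set up the Strichartz estimates and the normal form in the \emph{rescaled} variables, where the long-time oscillation $e^{i\Omega_0\tau/\epsilon^2}$ supplies the needed gain. Making the propagation-of-localization bound quantitative at $H^1_\epsilon\simeq L^2$ regularity --- so that the frequency tails of both $\Phi_\epsilon$ and $\psi_\epsilon^{(\textup{NLS})}$ stay $o_\epsilon(1)$ throughout $[-T,T]$ --- is the delicate technical core on which the whole argument rests.
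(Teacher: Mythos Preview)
Your overall architecture---first-order reduction $u_+$, rescaling to the amplitude $\Phi_\epsilon$, splitting the error into dispersion mismatch, symbol correction, and non-resonant cubic, and invoking \textbf{(H2)} at the threshold $\epsilon^{-1/3}$---matches the paper's. The difference, and the gap, is in how the non-resonant piece (c) is controlled.

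There is a circularity between your normal-form step and your propagation-of-localization claim for $\Phi_\epsilon$. You estimate the leftover $\epsilon^2\int\|\partial_\sigma(\Phi_\epsilon^3)\|_{L^2}\,d\sigma$ by asserting that on the band $|\eta|\lesssim\delta\epsilon^{-1/3}$ the symbol $\mathcal L_\epsilon$ is $O(\epsilon^{-2/3})$; but this requires $\Phi_\epsilon$ to live on that band, and your localization argument (``the cubic Picard iteration spreads frequency support only by a bounded factor'') applies only to the gauge-invariant cubic $|\Phi_\epsilon|^2\Phi_\epsilon$. The non-resonant terms in $\mathcal R_\epsilon$ carry not just a time phase $e^{i\Omega_0\tau/\epsilon^2}$ but also spatial factors $e^{\pm 2iky/\epsilon}$, $e^{-4iky/\epsilon}$, which shift frequencies by $O(\epsilon^{-1})\gg\epsilon^{-1/3}$. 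So from the first Duhamel step $\Phi_\epsilon$ acquires components at $|\eta|\sim\epsilon^{-1}$---precisely the pieces you are trying to prove small---on which $\mathcal L_\epsilon=O(\epsilon^{-2})$ and the $\epsilon^2$ gain is wiped out. The paper flags exactly this issue (Remark~2.1) and breaks the loop by decomposing the amplitude as $A_\epsilon=\psi_\epsilon+r_\epsilon$: the \emph{core} $\psi_\epsilon$ solves an equation with only the resonant cubic, so propagation-of-localization (Proposition~5.1, via a soft high-pass multiplier $m_N(D)$) is legitimate for it; the \emph{remainder} $r_\epsilon$ collects all non-resonant and cross terms and is shown small not by a normal form but by an $X^{s,b}$ multilinear estimate (Lemma~6.2--6.3), where the modulation weight $\langle\tau+p_\epsilon(\xi)\rangle^{-(1/2-\eta)}$ on the output is $\lesssim\epsilon^{1-2\eta}$ thanks to the non-resonance, with no time derivative of the solution ever needed. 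The core $\psi_\epsilon$ is then compared to $\psi_\epsilon^{(\mathrm{NLS})}$ by the linear-flow lemma plus the localization just proved.

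Your scheme is salvageable if you apply the normal form to $(\psi_\epsilon^{(\mathrm{NLS})})^3$---which \emph{is} localized, by the argument you give---and absorb $\mathcal R_\epsilon(\Phi_\epsilon)-\mathcal R_\epsilon(\psi_\epsilon^{(\mathrm{NLS})})$ linearly into the contraction for $E_\epsilon$; but as written, with the normal form acting on $\Phi_\epsilon^3$ and localization of $\Phi_\epsilon$ assumed, the argument does not close. The paper's $X^{s,b}$ route is the more robust device at this regularity precisely because it encodes the time-oscillation gain without requiring $\partial_\sigma\Phi_\epsilon\in L^2$.
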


\begin{remark}
$(i)$ The convergence \eqref{main theorem NLS approximation} is well-known for higher regularity solutions. By a dynamical system approach, it is shown provided that the NLS flow $\psi^{(\textup{NLS})}(t)$ is in $C_t([-T,T]; H^s(\mathbb{R}))$ for $s\geq 5$ \cite[Theorem 11.2.6]{SU}. In \cite{MU}, the regularity is reduced to $s>1$. In our main theorem, the required regularity is reduced to one, but which is formally asymptotically zero (see Remark \ref{initial data remark} $(i)$).\\
$(ii)$ Higher-order corrections will not be discussed in this paper, because our main focus is on including rough solutions while higher-order corrections are typically valid for more regular solutions (see \cite[Theorem 11.2.6]{SU}).
\end{remark}

\begin{remark}[Optimality]\label{remark: optimality}
The assumptions \textbf{\textup{(H1)}}-\textbf{\textup{(H2)}} are optimal in the sense that the linear part of the reformulated equation can be approximated by the linear Schr\"odinger flow only under \textbf{\textup{(H1)}}-\textbf{\textup{(H2)}} (see Lemma \ref{linear flow convergence} and Remark \ref{remark: linear flow convergence}).
\end{remark}

\begin{remark}[Rate of convergence in \eqref{main theorem NLS approximation}]
$(i)$ By the $\epsilon$-scaling, both the NLKG flow $u_\epsilon(t)$ and the NLS ansatz $\epsilon \psi_\epsilon^{(\textup{NLS})}(\epsilon^2 t, \epsilon (x-c_gt))e^{i(kx-\omega t)}+c.c.$ are of $O(\sqrt{\omega})$ in $H^1(\mathbb{R})$. The main result \eqref{main theorem NLS approximation} justifies the NLS approximation with $o(\sqrt{\omega})$-difference.\\
$(ii)$ It does not seem possible to improve the $o(\sqrt{\omega})$-difference in \eqref{main theorem NLS approximation} for general solutions in Theorem \ref{main theorem}, because our proof relies on a density argument (see Remark \ref{remark: linear flow convergence}). 
\end{remark}

The next theorem provides a precise rate of convergence assuming more regularity.

\begin{theorem}[NLS approximation to the NLKG; rate of convergence]\label{main theorem'}
In Theorem \ref{main theorem}, we further assume that for some $s>0$,
\begin{equation}\tag{\textbf{\textup{H2'}}}
\sup_{\epsilon\in(0,1]}\|\psi_{\epsilon,0}\|_{H^s(\mathbb{R})}<\infty.
\end{equation}
Then, for any small $\eta>0$, we have
$$\left\|u_\epsilon(t,x)-\left(\epsilon \psi_\epsilon^{(\textup{NLS})}(\epsilon^2 t, \epsilon (x-c_gt))e^{i(kx-\omega t)}+c.c.\right)\right\|_{C_t([-\frac{T}{\epsilon^2},\frac{T}{\epsilon^2}];L^2(\mathbb{R}))}\lesssim \epsilon^{\min\{\frac{s}{3}+\frac{1}{2}, \frac{3}{2}-\eta\}}.$$
\end{theorem}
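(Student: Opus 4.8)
The plan is to upgrade the qualitative convergence of Theorem \ref{main theorem} to a quantitative rate by tracking explicitly each error term that was merely sent to zero in the density/compactness argument, now exploiting the uniform $H^s$-bound from \textbf{(H2')}. First I would reformulate the NLKG as a first-order system by introducing $v_\epsilon = u_\epsilon + i\langle D\rangle^{-1}\partial_t u_\epsilon$ (or whatever diagonalizing variable the earlier part of the paper uses), so that $v_\epsilon$ solves a Schr\"odinger-type equation $i\partial_t v_\epsilon = \langle D\rangle v_\epsilon + (\text{cubic})$, and then extract the slowly-modulated envelope $\psi_\epsilon$ by conjugating out the carrier wave $e^{i(kx-\omega t)}$ and rescaling $(t,x)\mapsto(\epsilon^2 t,\epsilon(x-c_g t))$. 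The difference $\psi_\epsilon - \psi_\epsilon^{(\textup{NLS})}$ then satisfies an equation whose inhomogeneous terms fall into three groups: (a) the \emph{linear dispersive error}, coming from replacing the exact symbol $\epsilon^{-2}(\langle k+\epsilon\xi\rangle-\omega-\epsilon c_g\xi)$ by its quadratic Taylor truncation $\tfrac{1-c_g^2}{2\omega}\xi^2$; (b) the \emph{nonlinear resonance error}, coming from the non-resonant frequency interactions $e^{\pm i(kx-\omega t)}$, $e^{\pm 3i(kx-\omega t)}$ generated by the cube of the ansatz, which are removed by normal-form/integration-by-parts in time and leave an $O(\epsilon)$ remainder; and (c) the \emph{high-frequency tail error}, from the part of the data with $|\xi|\gtrsim \epsilon^{-1/3}$, which is where \textbf{(H2')} replaces \textbf{(H2)}.

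Next I would estimate each group. For (c), Bernstein together with $\|\psi_{\epsilon,0}\|_{H^s}\lesssim 1$ gives $\|\mathbf{1}_{|D|>\delta\epsilon^{-1/3}}\psi_{\epsilon,0}\|_{H^1_\epsilon}\lesssim (\epsilon^{1/3})^{s}$, i.e. an $\epsilon^{s/3}$ loss; propagating this through the (locally-in-time, uniformly-in-$\epsilon$) well-posedness on $[-T/\epsilon^2,T/\epsilon^2]$ in the rescaled variable contributes $\epsilon^{s/3}$, and the extra $\epsilon^{1/2}$ in the statement is exactly the amplitude prefactor $\epsilon$ divided by the $\sqrt\epsilon$ Jacobian normalization in $L^2(\mathbb{R})$ — so this branch yields the $\epsilon^{s/3+1/2}$ term. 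For (a): on the remaining frequencies $|\xi|\lesssim\epsilon^{-1/3}$ the Taylor remainder is $O(\epsilon|\xi|^3)=O(\epsilon\cdot\epsilon^{-1})=O(1)$ pointwise, which is too crude, so I would instead integrate the Duhamel term in time using the oscillation of $e^{it(\langle\cdot\rangle-\cdots)}$, or equivalently localize dyadically in $|\xi|\sim 2^j$ with $2^j\lesssim\epsilon^{-1/3}$ and sum $\sum_j \epsilon\, 2^{3j}\cdot\|\mathbf{1}_{|\xi|\sim 2^j}\psi\|$ against the $H^s$-gain $2^{-js}$; the worst dyadic block sits at $2^j\sim\epsilon^{-1/3}$ and the time integration over length $\epsilon^{-2}$ in original time (length $O(1)$ in rescaled time) produces the cap at $\epsilon^{3/2-\eta}$, the $\eta$ absorbing a logarithmic/endpoint loss from the dyadic sum or from a borderline Strichartz exponent. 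For (b), the normal-form transformation is bounded on $L^2$ with the non-resonant denominators $\langle k\rangle$, $3\langle k\rangle - \langle 3k\rangle$ etc. bounded below uniformly, so this contributes a clean $O(\epsilon)$, dominated by $\epsilon^{3/2-\eta}$ for the relevant range (and by $\epsilon^{s/3+1/2}$ when $s$ is small). Combining, the total error is $\lesssim \epsilon^{\min\{s/3+1/2,\ 3/2-\eta\}}$, which is the claim after undoing the rescaling.

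Finally I would close the estimate by a continuity/bootstrap argument in the rescaled time interval $[-T,T]$: assuming the difference is $\le 1$ (say) on a subinterval, the cubic nonlinearity's contribution to $\|\psi_\epsilon-\psi_\epsilon^{(\textup{NLS})}\|_{C_tL^2}$ is controlled by $T$ times (a constant depending on the uniform $H^1_\epsilon$-bounds of both flows, which are in hand from \textbf{(H1)} and NLS theory) times that difference, so shrinking $T$ — the same $T$ as in Theorem \ref{main theorem}, since no further smallness is needed — absorbs it into the left side, and the source terms (a)--(c) give the stated power of $\epsilon$. The main obstacle is step (a): the naive pointwise bound on the dispersive Taylor remainder is size $O(1)$ exactly at the frequency truncation scale $\epsilon^{-1/3}$, so genuine dispersive smoothing (time oscillation / local smoothing / a Strichartz estimate for the rescaled propagator) must be invoked to gain back the missing power and land at $\epsilon^{3/2}$ up to the $\eta$-loss; getting the sharp exponent $3/2$ rather than something smaller, and identifying where the unavoidable $\eta$ enters, is the delicate point, whereas (b) and (c) are comparatively mechanical.
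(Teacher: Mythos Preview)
Your three-group decomposition (a)/(b)/(c) matches the paper's architecture (core profile vs.\ NLS, plus remainder), and the $\sqrt{\epsilon}$ scaling factor is correctly identified. However, you have the difficult and easy steps swapped, and this is a genuine gap.

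The linear dispersive error (a) is \emph{not} the delicate point. On all frequencies one has the elementary pointwise bound $\min\{\epsilon|\xi|^3,1\}\le \epsilon^{s/3}\langle\xi\rangle^{s}$ (for $0<s\le 3$), so Lemma~\ref{linear flow convergence}(ii) gives $\|S_\epsilon(t)u_0-e^{\frac{it}{4\sqrt{2}}\partial_x^2}u_0\|_{L^2}\lesssim\epsilon^{s/3}\|u_0\|_{H^s}$ directly, with no dispersive smoothing, no Strichartz, and no $\eta$-loss. Your worry that the Taylor remainder is ``$O(1)$ at the truncation scale, which is too crude'' overlooks that we are measuring in $L^2$ against an $H^s$-bounded function, not in $L^\infty$ of the symbol; the same $\epsilon^{s/3}$ falls out of both the low-frequency and high-frequency regimes, so (a) and (c) together simply give $\epsilon^{s/3}$, and your proposed oscillatory-integral machinery for (a) is unnecessary.

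Conversely, the non-resonant remainder (b) is where the $\eta$-loss actually lives, and it is \emph{not} mechanical. The paper does not use a classical normal form but instead a Bourgain-space trilinear estimate (Lemma~\ref{remainder estimate reduction lemma}), reduced to a delicate two-variable integral inequality (Lemma~\ref{key integral inequality}) that exploits the $\sim\epsilon^{-2}$ gap between the output modulation $\tau+p_\epsilon(\xi\pm\tfrac{2}{\epsilon})$ and the input modulations; this yields $\|r_\epsilon\|\lesssim\epsilon^{1-2\eta}$, and the $\eta$ is an endpoint loss from working in $X^{0,-(1/2-\eta)}$. Your ``clean $O(\epsilon)$'' from a bounded normal form is not justified at $H_\epsilon^1$ regularity: integrating by parts in time produces boundary terms and a $\partial_t(A_\epsilon^3)$ term that need $L^2$ control of $A_\epsilon^3$, but $\|A_\epsilon\|_{L^\infty}\lesssim\epsilon^{-1/2}\|A_\epsilon\|_{H_\epsilon^1}$ is the best Sobolev bound available, so one is forced back to Strichartz/Bourgain machinery, which re-introduces the $\eta$. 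In short, the cap $\epsilon^{3/2-\eta}$ in the theorem comes from (b), not (a), and proving it is the substance of Section~\ref{sec: remainder}.
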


\begin{remark}
$(i)$ \textbf{\textup{(H2')}} is a stronger assumption, because \textbf{\textup{(H2')}} implies \textbf{\textup{(H2)}}.\\
$(ii)$ From the linear flow approximation, one can see that the $O(\epsilon^{\frac{s}{3}+\frac{1}{2}})$-rate of convergence is optimal (see Remark \ref{remark: linear flow convergence}).
\end{remark}

The main contribution of this paper is to introduce a new approach for the NLS approximation which we think is robust. Our approach is based on the reformulation of the problem as a system of integral equations \eqref{system eq}, or the Duhamel representation (see Section \ref{sec: reformulation}). It turns out that this integral representation has several crucial advantages in reducing regularity. First of all, we note that in a dynamical system approach \cite[Section 11.2]{SU}, it involves estimating the residual
$$\textup{Res}(u)=-\partial_t^2u+\partial_x^2u-u-u^3,\quad u(t,x)=\epsilon \psi^{(\textup{NLS})}(\epsilon^2 t, \epsilon (x-c_gt))e^{i(kx-\omega t)}+c.c.,$$
where $\psi^{(\textup{NLS})}$ is a solution to the NLS \eqref{NLS0}. Thus, proving smallness of the residual in $H^1(\mathbb{R})$ requires high Sobolev norm bounds, $\|\psi^{(\textup{NLS})}\|_{H^5(\mathbb{R})}<\infty$. However, such derivative terms do not appear in the integral equation \eqref{system eq}.

Secondly, the reformulation provides more detailed information about the limit procedure. Indeed, derivation of the integral equations \eqref{system eq} leads us to notice that the amplitude $A_\epsilon(t)$ in \eqref{u ansatz} includes very high frequency waves, and thus the amplitude must be separated into the core profile $\psi_\epsilon$ and the high frequency remainder $r_\epsilon$ (see Remark \ref{remark; high frequency piece}). Consequently, they must be measured separately in Sobolev norms with different scales.

Lastly, we mention that from the integral representation \eqref{system eq}, dispersive effects can be captured properly. By the reformulation, we see that the linear evolution in \eqref{system eq} is given by the propagator $S_\epsilon(t)=e^{-\frac{it}{\epsilon^2}(\langle 1+\epsilon D\rangle-\sqrt{2}-\frac{\epsilon}{\sqrt{2}}D)}$. Then, employing well-known Fourier analysis methods as Strichartz estimates and multilinear estimates involving Fourier restriction norms, one can deduce uniform bounds for nonlinear solutions which are useful to prove the NLS approximation. 

\begin{remark}
The NLS is derived from the NLKG in a different context, namely as a non-relativistic limit \cite{MNO}, but it does not have the technical issue caused by the high frequency remainder.
\end{remark}

\begin{remark}
An interesting question is whether the $O(\frac{1}{\epsilon^2})$-interval of approximation can be extended.  In \cite{FeolaGiuliani}, a positive answer is given for more complicated quadratic NLKG in a periodic setting but with more regular solutions. As an alternative approach, one may attempt to use the energy conservation law to extend the interval of approximation. Nevertheless, we are currently unable to do that. Indeed, our proof heavily relies on the two-wave packet structure $\epsilon\psi_{\epsilon,0}(\epsilon\cdot)e^{ikx}+c.c.$ at initial time, but the structure will be broken up as time goes, because the nonlinearity immediately generates different frequency modes. Unfortunately, the energy conservation law does not seem to control this procedure straightforwardly. Therefore, it will be left to our future work.
\end{remark}

\subsection{Organization of the paper}
In Section \ref{sec: reformulation}, we derive the system of integral equations \eqref{system eq}. In Section \ref{sec: linear flow} and \ref{sec: well-posedness}, we investigate the properties of the linear part of \eqref{system eq} and use them to prove basic well-posedness of the system. Then, in Section \ref{sec: core profile} and \ref{sec: remainder}, we prove more refined estimates for the core profile (smallness of high frequencies) and the remainder (smallness). Finally, in Section \ref{sec: proof of the main results}, we complete the proof of the main results. 

\subsection{Acknowledgement}
This work was supported by the National Research Foundation of Korea (NRF) grant funded by the Korea government (MSIT) (NRF-2020R1A2C4002615).

\section{Reformulation of the problem}\label{sec: reformulation}

To begin with, we present a reformulation of the cubic nonlinear Klein-Gordon equation \eqref{NLKG} in a way that Fourier analysis methods can be properly employed. 

\subsection{Derivation of the equation for the amplitude function}
For numerical simplicity, we fix $k=1$ by scaling, and look for a solution of the form
$$u_\epsilon(t,x)=\epsilon A_\epsilon(\epsilon^2t,\epsilon (x-\tfrac{t}{\sqrt{2}}))e^{i(x-\sqrt{2}t)}+c.c..$$
In order to find the equation for the amplitude $A_\epsilon(t,x)$, rescaling by $v_\epsilon(t,x)=\frac{1}{\epsilon}u_\epsilon(\frac{t}{\epsilon^2},\frac{x}{\epsilon})$, we deduce 
\begin{equation}\label{rescaled NLKG}
\partial_t^2v_\epsilon+\frac{1}{\epsilon^4}(1-\epsilon^2\partial_x^2)v_\epsilon-\frac{1}{\epsilon^2}v_\epsilon^3=0
\end{equation}
with initial data $(v_\epsilon(0), \partial_tv_\epsilon(0)\big)=\big(2\textup{Re}(e^{\frac{ix}{\epsilon}}\psi_{\epsilon,0}), \tfrac{2}{\epsilon^2}\langle\epsilon\partial_x\rangle\textup{Im}(e^{\frac{ix}{\epsilon}}\psi_{\epsilon,0}))$. We note that the equation \eqref{rescaled NLKG} in a strong form is given by 
$$\begin{aligned}
v_\epsilon(t)&=\left\{e^{-\frac{it}{\epsilon^2}\langle\epsilon D\rangle}e^{\frac{ix}{\epsilon}}\psi_{\epsilon,0}-\frac{i}{2}\int_0^t e^{-\frac{i(t-t_1)}{\epsilon^2}\langle\epsilon D\rangle}\frac{1}{\langle\epsilon D\rangle} v_\epsilon(t_1)^3dt_1\right\}+c.c.
\end{aligned}$$
with
$$v_\epsilon(t,x)=e^{-\frac{itD}{\epsilon\sqrt{2}}}e^{\frac{i}{\epsilon}(x-\frac{t}{\epsilon\sqrt{2}})}A_\epsilon(t,x)+c.c..$$
Next, complexifying the equation, we impose that the amplitude function $A_\epsilon(t)$ obeys the equation
$$\begin{aligned}
A_\epsilon(t)&=e^{-\frac{i}{\epsilon}(x-\frac{t}{\epsilon\sqrt{2}})} e^{-\frac{it}{\epsilon^2}(\langle\epsilon D\rangle-\frac{\epsilon D}{\sqrt{2}})}e^{\frac{ix}{\epsilon}}\psi_{\epsilon,0}\\
&\quad-\frac{i}{2}\int_0^t e^{-\frac{i}{\epsilon}(x-\frac{t}{\epsilon\sqrt{2}})} e^{-\frac{i(t-t_1)}{\epsilon^2}(\langle\epsilon D\rangle-\frac{\epsilon D}{\sqrt{2}})}\frac{1}{\langle\epsilon D\rangle}\left(e^{\frac{i}{\epsilon}(x-\frac{t_1}{\epsilon\sqrt{2}})}A_\epsilon(t_1)+c.c.\right)^3 dt_1.
\end{aligned}$$
We observe that $e^{-\frac{i}{\epsilon}(x-\frac{t}{\epsilon\sqrt{2}})} e^{-\frac{it}{\epsilon^2}(\langle\epsilon D\rangle-\frac{\epsilon D}{\sqrt{2}})}e^{\frac{ix}{\epsilon}}$ is the Fourier multiplier of symbol $e^{-itp_\epsilon(\xi)}$, where
\begin{equation}\label{varphi}
p_\epsilon(\xi):=\frac{1}{\epsilon^2}\left(\langle 1+\epsilon\xi\rangle-\sqrt{2}-\frac{\epsilon}{\sqrt{2}}\xi\right),
\end{equation}
and similarly, $e^{-\frac{i}{\epsilon}(x-\frac{t}{\epsilon\sqrt{2}})} e^{-\frac{i(t-t_1)}{\epsilon^2}(\langle\epsilon D\rangle-\frac{\epsilon D}{\sqrt{2}})}\frac{1}{\langle\epsilon D\rangle}e^{\frac{i}{\epsilon}(x-\frac{t_1}{\epsilon\sqrt{2}})}$ has symbol $\frac{e^{-i(t-t_1)p_\epsilon(\xi)}}{\langle 1+\epsilon\xi\rangle}$. Thus, introducing the linear propagator 
$$S_\epsilon(t)=e^{-itp_\epsilon(D)},$$
the equation can be written as 
$$\begin{aligned}
A_\epsilon(t)&=S_\epsilon(t)\psi_{\epsilon,0}-\frac{i}{2}\int_0^tS_\epsilon(t-t_1)\frac{1}{\langle 1+\epsilon D\rangle}e^{-\frac{i}{\epsilon}(x-\frac{t_1}{\epsilon\sqrt{2}})}\left(A_\epsilon(t_1)e^{\frac{i}{\epsilon}(x-\frac{t_1}{\epsilon\sqrt{2}})}+c.c.\right)^3dt_1.
\end{aligned}$$
Finally, expanding the nonlinear term as
$$e^{-\frac{i}{\epsilon}(x-\frac{t}{\epsilon\sqrt{2}})}\left(A_\epsilon e^{\frac{i}{\epsilon}(x-\frac{t}{\epsilon\sqrt{2}})}+c.c.\right)^3=3\big(|A_\epsilon|^2A_\epsilon+\mathcal{R}_\epsilon(A_\epsilon)\big),$$
where
$$\mathcal{R}_\epsilon(A_\epsilon):=\frac{1}{3}e^{\frac{2i}{\epsilon}(x-\frac{t}{\epsilon\sqrt{2}})}A_\epsilon^3+e^{-\frac{2i}{\epsilon}(x-\frac{t}{\epsilon\sqrt{2}})}|A_\epsilon|^2\bar{A}_\epsilon+\frac{1}{3}e^{-\frac{4i}{\epsilon}(x-\frac{t}{\epsilon\sqrt{2}})}\bar{A}_\epsilon^3,$$
we obtain the amplitude equation in a compact form 
\begin{equation}\label{NLKG'}
A_\epsilon(t)=S_\epsilon(t)\psi_{\epsilon,0}-\frac{3i}{2}\int_0^t S_\epsilon(t-t_1)\frac{1}{\langle1+\epsilon D\rangle}\left(|A_\epsilon|^2A_\epsilon+\mathcal{R}_\epsilon(A_\epsilon)\right)(t_1)dt_1.
\end{equation}

\subsection{Decomposition of the amplitude equation}

It turns out, however, that the equation \eqref{NLKG'} by itself is rather difficult to analyze, because on the frequency side, the solution consists of a large localized part and small very high frequency pieces. 

\begin{remark}\label{remark; high frequency piece}
The solution $A_\epsilon$ to \eqref{NLKG'} is never localized in frequencies of order one. Indeed, even if $\hat{A}_\epsilon(t,\xi)$ is mostly localized near the origin at time $t=0$, the nonlinear term 
$$-\frac{i}{2}\int_0^t S_\epsilon(t-t_1)\frac{1}{\langle1+\epsilon D\rangle}\left(e^{\frac{2i}{\epsilon}(x-\frac{t_1}{\epsilon\sqrt{2}})}A_\epsilon^3+3e^{-\frac{2i}{\epsilon}(x-\frac{t_1}{\epsilon\sqrt{2}})}|A_\epsilon|^2\bar{A}_\epsilon+e^{-\frac{4i}{\epsilon}(x-\frac{t_1}{\epsilon\sqrt{2}})}\bar{A}_\epsilon^3\right)(t_1)dt_1$$
immediately generates very high frequencies $\xi=\mp\frac{2}{\epsilon}, -\frac{4}{\epsilon}$. One may expect that these high frequency pieces vanish as $\epsilon\to 0$ due to fast dispersion. However, they might be measured largely in a standard Sobolev norm $\|\cdot\|_{H^s(\mathbb{R})}$. On the other hand, if one employs the rescaled Sobolev norm $\|\cdot\|_{H_\epsilon^s(\mathbb{R})}$ for \eqref{NLKG'}, one becomes unable to capture how much portion of $A_\epsilon$ is localized in frequencies. 
\end{remark}

A key idea to resolve the problem is to separate the amplitude function into the core profile and the high frequency remainder,
$$A_\epsilon(t,x)=\psi_\epsilon(t,x)+r_\epsilon(t,x),$$
and impose that $(\psi_\epsilon(t),r_\epsilon(t))$ solves the system of equations 
\begin{equation}\label{system eq}
\left\{\begin{aligned}
\psi_\epsilon(t)&=S_\epsilon(t)\psi_{\epsilon,0}-\frac{3i}{2}\int_0^t S_\epsilon(t-t_1)\frac{1}{\langle 1+\epsilon D\rangle}\left(|\psi_\epsilon|^2\psi_\epsilon\right)(t_1)dt_1,\\
r_\epsilon(t)&=-\frac{3i}{2}\int_0^t S_\epsilon(t-t_1)\frac{1}{\langle 1+\epsilon D\rangle}\big(|A_\epsilon|^2(A_\epsilon)-|\psi_\epsilon|^2\psi_\epsilon+\mathcal{R}_\epsilon(A_\epsilon)\big)(t_1)dt_1.
\end{aligned}\right.
\end{equation}
In a sequel, they will be estimated separately.

\section{Linear rescaled Klein-Gordon flow}\label{sec: linear flow}

In this section, we investigate properties of the linear rescaled Klein-Gordon flow 
$$S_\epsilon(t)=e^{-itp_\epsilon(D)}=e^{-\frac{it}{\epsilon^2}(\langle 1+\epsilon D\rangle-\sqrt{2}-\frac{\epsilon}{\sqrt{2}}D)}$$
focusing on its connection to the linear Schr\"odinger flow $e^{\frac{it}{4\sqrt{2}}\partial_x^2}$.

\subsection{Convergence of the linear flow}
By the formal Taylor series expansion
$$\langle 1+\epsilon D\rangle=\sqrt{2}+\frac{\epsilon}{\sqrt{2}}D+\frac{\epsilon^2}{4\sqrt{2}}D^2-\frac{\epsilon^3}{8\sqrt{2}}D^3+\cdots,$$
it is expected that $S_\epsilon(t)$ converges to $e^{\frac{it}{4\sqrt{2}}\partial_x^2}$ as $\epsilon\to 0$. It can be stated rigorously as follows.

\begin{lemma}[Convergence of the linear flow $S_\epsilon(t)$]\label{linear flow convergence}
Let $\epsilon\in(0,1]$. 
$(i)$ For any sufficiently small $\delta>0$, we have
$$\|S_\epsilon(t)u_0-e^{\frac{it}{4\sqrt{2}}\partial_x^2}u_0\|_{L^2(\mathbb{R})}\lesssim \delta^3\|u_0\|_{L^2(\mathbb{R})}+\|P_{>\delta\epsilon^{-1/3}}u_0\|_{L^2(\mathbb{R})}.$$
$(ii)$ If $u_0\in H^s(\mathbb{R})$, then 
$$\|S_\epsilon(t)u_0-e^{\frac{it}{4\sqrt{2}}\partial_x^2}u_0\|_{L^2(\mathbb{R})}\lesssim \epsilon^{\frac{s}{3}}\|u_0\|_{H^s(\mathbb{R})}.$$
\end{lemma}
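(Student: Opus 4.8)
The plan is to work entirely on the Fourier side, where both propagators are Fourier multipliers: $S_\epsilon(t)$ has symbol $e^{-itp_\epsilon(\xi)}$ with $p_\epsilon$ as in \eqref{varphi}, while $e^{\frac{it}{4\sqrt{2}}\partial_x^2}$ has symbol $e^{-\frac{it}{4\sqrt{2}}\xi^2}$. By Plancherel,
$$
\|S_\epsilon(t)u_0-e^{\frac{it}{4\sqrt{2}}\partial_x^2}u_0\|_{L^2(\mathbb{R})}^2=\int_{\mathbb{R}}\big|e^{-itp_\epsilon(\xi)}-e^{-\frac{it}{4\sqrt{2}}\xi^2}\big|^2|\hat u_0(\xi)|^2\,d\xi,
$$
so the whole lemma reduces to a pointwise estimate on the symbol difference, uniformly in $t$. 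The elementary bound $|e^{ia}-e^{ib}|\le\min\{2,|a-b|\}$ lets me compare $t\,p_\epsilon(\xi)$ with $\frac{t}{4\sqrt{2}}\xi^2$; the key quantity is therefore $\big|p_\epsilon(\xi)-\frac{1}{4\sqrt{2}}\xi^2\big|$, which by the Taylor expansion of $\langle 1+\epsilon\xi\rangle$ displayed just before the lemma is $O(\epsilon|\xi|^3)$ for $|\epsilon\xi|\lesssim 1$ — more precisely I would write $p_\epsilon(\xi)-\frac{1}{4\sqrt{2}}\xi^2=\epsilon\xi^3\,g(\epsilon\xi)$ for a bounded smooth function $g$ on a neighborhood of the origin, using Taylor's theorem with remainder on $h(y):=\frac{1}{\epsilon^2}(\langle 1+y\rangle-\sqrt2-\frac{y}{\sqrt2}-\frac{y^2}{4\sqrt2})$.

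For part $(i)$: split the frequency integral at $|\xi|=\delta\epsilon^{-1/3}$. On the high-frequency piece $|\xi|>\delta\epsilon^{-1/3}$, use the trivial bound $|e^{-itp_\epsilon(\xi)}-e^{-\frac{it}{4\sqrt2}\xi^2}|\le 2$ to get $\|P_{>\delta\epsilon^{-1/3}}u_0\|_{L^2}$ (up to a constant). On the low-frequency piece $|\xi|\le\delta\epsilon^{-1/3}$ — note this forces $|\epsilon\xi|\le\delta\epsilon^{2/3}\le\delta$, so we are safely in the region where the Taylor expansion is valid — use $|e^{-itp_\epsilon(\xi)}-e^{-\frac{it}{4\sqrt2}\xi^2}|\le |t|\,|p_\epsilon(\xi)-\frac{1}{4\sqrt2}\xi^2|$... but here I must be careful, since there is no $t$ on the right-hand side of the claimed inequality. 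The resolution is that the intended bound must be uniform in $t$, so I should instead bound the symbol difference directly by $2$ on the part where $|t|\,\epsilon|\xi|^3\gtrsim 1$ and by $|t|\,\epsilon|\xi|^3$ otherwise; taking the minimum and using $\epsilon|\xi|^3\le\delta^3$ when $|\xi|\le\delta\epsilon^{-1/3}$ gives a pointwise bound by $\min\{2,|t|\delta^3\}\lesssim\delta^3$ only if $|t|$ is controlled. Reading the statement again, I believe the cleanest route is: on $|\xi|\le\delta\epsilon^{-1/3}$ use $|e^{-itp_\epsilon}-e^{-itq}|\le\min\{2,|t(p_\epsilon-q)|\}$ and observe $|p_\epsilon(\xi)-\tfrac1{4\sqrt2}\xi^2|\lesssim\epsilon|\xi|^3\le\delta^3$ — so I will present it with the understanding that $t$ ranges over a bounded set (or absorb $|t|$ into the constant, since in all later applications $S_\epsilon$ is used on bounded time intervals after rescaling), yielding the $\delta^3\|u_0\|_{L^2}$ term. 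Combining the two pieces gives $(i)$.

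For part $(ii)$: now optimize the cutoff. For a threshold $N>0$ to be chosen, split as above. The high part contributes $\lesssim \|P_{>N}u_0\|_{L^2}\le N^{-s}\|u_0\|_{H^s}$; the low part contributes $\lesssim (\epsilon N^3)\|u_0\|_{L^2}$ (again treating $|t|$ as bounded, or with the constant depending on the time interval), provided $\epsilon N^3\lesssim 1$, i.e.\ $N\lesssim\epsilon^{-1/3}$. Balancing $\epsilon N^3=N^{-s}$ gives $N=\epsilon^{-1/(3+s)}$, which does satisfy $N\lesssim\epsilon^{-1/3}$, and the common value is $\epsilon^{s/(3+s)}$ — hmm, this is $\epsilon^{s/(3+s)}$, not $\epsilon^{s/3}$, so for the stated $\epsilon^{s/3}$ bound I instead simply take $N=\epsilon^{-1/3}$: then the high part gives $\epsilon^{s/3}\|u_0\|_{H^s}$ and the low part gives $\epsilon\cdot\epsilon^{-1}\|u_0\|_{L^2}=\|u_0\|_{L^2}$, which is too big — so I must use the refined low-frequency bound $\min\{2,|t|\epsilon|\xi|^3\}\le (|t|\epsilon|\xi|^3)^{\theta}\cdot 2^{1-\theta}$ for a suitable $\theta\in(0,1)$, or more simply note $\min\{2,\epsilon|\xi|^3\}\le (2)^{1-s/3}(\epsilon|\xi|^3)^{s/3}\lesssim\epsilon^{s/3}|\xi|^s$ when $s\le 3$, which integrates against $|\hat u_0|^2$ to $\epsilon^{s/3}\|u_0\|_{H^s}^2$ directly without any cutoff at all — and for $s>3$ the bound for $s=3$ plus monotonicity suffices. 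This interpolation trick, applied to the symbol difference on the whole frequency line, is the crux of the argument; the main obstacle is simply being careful about the role of $t$ and about the range of validity of the Taylor expansion (ensuring $|\epsilon\xi|$ stays bounded precisely on the frequency range where the polynomial approximation is used).
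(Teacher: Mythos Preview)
Your approach is essentially the same as the paper's: Plancherel reduces the estimate to the pointwise symbol bound $|e^{-it(p_\epsilon(\xi)-\frac{1}{4\sqrt2}\xi^2)}-1|\lesssim\min\{1,\epsilon|\xi|^3\}$, from which both (i) and (ii) follow exactly by the splitting/interpolation you describe (the paper simply writes ``Thus, the lemma follows''). Your worry about the $|t|$-dependence is legitimate but the paper also silently absorbs $|t|$ into the implicit constant (the lemma is only ever applied on bounded time intervals), and your worry about the range of the Taylor expansion is unnecessary: the bound $|p_\epsilon(\xi)-\tfrac{1}{4\sqrt2}\xi^2|\lesssim\epsilon|\xi|^3$ actually holds for \emph{all} $\xi\in\mathbb{R}$ (for $|\epsilon\xi|\ge 1$ the left side is $\lesssim\xi^2\le\epsilon|\xi|^3$), so no frequency restriction is needed and the interpolation $\min\{1,\epsilon|\xi|^3\}\le(\epsilon|\xi|^3)^{s/3}$ can be applied globally.
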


\begin{proof}
By the Plancherel theorem, we have
$$\|S_\epsilon(t)u_0-e^{\frac{it}{4\sqrt{2}}\partial_x^2}u_0\|_{L^2}\sim \|(e^{-it(p_\epsilon(\xi)-\frac{1}{4\sqrt{2}}\xi^2)}-1)\hat{u}_0\|_{L^2}.$$
Note that
\begin{equation}\label{symbol difference}
|e^{-it(p_\epsilon(\xi)-\frac{1}{4\sqrt{2}}\xi^2)}-1|\lesssim\min\left\{\epsilon|\xi|^3,1\right\},
\end{equation}
because $|p_\epsilon(\xi)-\frac{1}{4\sqrt{2}}\xi^2|\lesssim \epsilon|\xi|^3$ by Taylor's theorem. Thus, the lemma follows. 
\end{proof}

\begin{remark}\label{remark: linear flow convergence}
From the proof of Lemma \ref{linear flow convergence} (see \eqref{symbol difference} in particular), one can see that \textup{\textbf{(H2)}} is the minimal requirement for the convergence $\|S_\epsilon(t)u_0-e^{\frac{it}{4\sqrt{2}}\partial_x^2}u_0\|_{H_\epsilon^1(\mathbb{R})}\to 0$ as well as \textup{\textbf{(H2')}} is optimal for the $O(\epsilon^{\frac{s}{3}})$-rate of convergence. 
\end{remark}

\subsection{Uniform linear Strichartz estimates}
For $\epsilon\in(0,1]$, we define the rescaled Sobolev norms by 
$$\|u\|_{W_\epsilon^{s,r}(\mathbb{R})}:=\|\langle \epsilon D\rangle^s u\|_{L^r(\mathbb{R})}\quad\textup{and}\quad \|u\|_{H_\epsilon^s(\mathbb{R})}:=\|u\|_{W_\epsilon^{s,2}(\mathbb{R})}.$$
Using a smooth cut-off $\chi\in C_c^\infty(\mathbb{R})$ such that $\textup{supp}\chi\subset \{\xi: \frac{1}{2}<|\xi|<4\}$, $\chi\equiv 1$ on $1\leq |\xi|\leq 2$ and $\sum_{N\in 2^{\mathbb{Z}}}\chi(\frac{\cdot}{N})\equiv 1$, we define the Littlewood-Paley projection $P_N$ by
$$\widehat{P_N f}(\xi)=\chi(\tfrac{\xi}{N})\hat{f}(\xi).$$

Our analysis heavily relies on dispersive properties of the linear Klein-Gordon flow $S_\epsilon(t)$ in the form of Strichartz estimates. Indeed, such estimates are well-known for the standard Klein-Gordon equation, and in essence, the same holds for the rescaled one. However, in consideration of the $\epsilon\to 0$ limit, possible $\epsilon$-dependences in the inequalities must be clarified.

\begin{lemma}[Uniform linear Strichartz estimates]\label{Strichartz estimates}
Let $\epsilon\in(0,1]$. Suppose that $q,r,\tilde{q},\tilde{r}\geq 2$ and
$$\frac{2}{q}+\frac{1}{r}=\frac{2}{\tilde{q}}+\frac{1}{\tilde{r}}=\frac{1}{2}.$$
Then, we have
$$\|S_\epsilon(t)u_0\|_{L_t^q(\mathbb{R}; L^r(\mathbb{R}))}\lesssim \|u_0\|_{H_\epsilon^{\frac{3}{q}}(\mathbb{R})}$$
and
$$\left\|\int_0^t S_\epsilon(t-t_1)F(t_1)dt_1\right\|_{L_t^q(\mathbb{R}; L^r(\mathbb{R}))}\lesssim \|F\|_{L_t^{\tilde{q}'}(\mathbb{R}; W_{\epsilon}^{3(\frac{1}{q}+\frac{1}{\tilde{q}}), \tilde{r}'}(\mathbb{R}))}.$$
\end{lemma}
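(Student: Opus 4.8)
The plan is to reduce the estimates for $S_\epsilon(t)=e^{-itp_\epsilon(D)}$ to the classical Strichartz estimates for the standard Klein-Gordon propagator by means of a scaling argument, while carefully tracking the $\epsilon$-dependence. Recall from Section \ref{sec: reformulation} that $v_\epsilon(t,x)=\frac1\epsilon u_\epsilon(\frac t{\epsilon^2},\frac x\epsilon)$ and that the factors $e^{\pm\frac i\epsilon(x-\frac t{\epsilon\sqrt2})}$ relating $A_\epsilon$ to $v_\epsilon$ are merely modulations in space and a Galilean-type shift; in particular $S_\epsilon(t)$ is, up to such modulations, a rescaling of $e^{-\frac{it}{\epsilon^2}\langle\epsilon D\rangle}$, and the latter is, after the change of variables $x\mapsto x/\epsilon$, $t\mapsto t/\epsilon^2$, exactly the unrescaled Klein-Gordon flow $e^{-it\langle D\rangle}$ evaluated at rescaled arguments. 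Thus it suffices to prove the two inequalities for the genuine Klein-Gordon flow with the ordinary Sobolev space $H^{3/q}(\mathbb R)$ (no $\epsilon$), and then transport them back through the scaling, checking that the homogeneity of the space-time Lebesgue norms on both sides matches precisely the weight $\langle\epsilon D\rangle$ appearing on the right-hand side.

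The first step is therefore to establish the homogeneous estimate $\|e^{-it\langle D\rangle}u_0\|_{L^q_tL^r_x(\mathbb R^{1+1})}\lesssim\|u_0\|_{H^{3/q}(\mathbb R)}$ for admissible $(q,r)$ with $\frac2q+\frac1r=\frac12$, $q,r\ge2$. I would do a Littlewood–Paley decomposition $u_0=\sum_N P_Nu_0$: on each dyadic block the standard dispersive decay $\|e^{-it\langle D\rangle}P_Nf\|_{L^\infty_x}\lesssim |t|^{-1/2}\langle N\rangle^{3/2}\|P_Nf\|_{L^1_x}$ (for $|t|\lesssim\langle N\rangle$, with the analogous bound for larger $|t|$) holds because the phase $\langle\xi\rangle$ on the support of $\chi(\xi/N)$ has second derivative comparable to $\langle N\rangle^{-3}$ — this is where the loss of $3/q$ derivatives, rather than the wave-type $1/q$, originates. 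Interpolating with the trivial $L^2$ bound and applying the Keel–Tao $TT^*$ machinery yields $\|e^{-it\langle D\rangle}P_Nf\|_{L^q_tL^r_x}\lesssim N^{3/q}\|P_Nf\|_{L^2}$; squaring in $N$ and using the dual Littlewood–Paley square-function bound on the $L^q_tL^r_x$ side (legitimate since $q,r\ge2$) sums the blocks to the claimed global estimate. The inhomogeneous estimate then follows by combining the homogeneous bound with its dual, a standard Christ–Kiselev argument to deal with the retarded integral, with the two endpoints' derivative counts $3/q$ and $3/\tilde q$ adding to produce the weight $W^{3(1/q+1/\tilde q),\tilde r'}$ on the right.

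The second step is the scaling bookkeeping. Writing $\|S_\epsilon(t)u_0\|_{L^q_t([0,T]; L^r_x)}$ and undoing the modulation and the $(t,x)\mapsto(\epsilon^2 t,\epsilon x)$ rescaling converts this, up to the harmless constant shifts, into $\epsilon^{-(\frac1q\cdot(-2)+\frac1r\cdot(-1))}=\epsilon^{1/2}$ times $\|e^{-it\langle D\rangle}(\text{rescaled }u_0)\|_{L^q_tL^r_x}$; feeding in the unrescaled Strichartz bound and rescaling $H^{3/q}$ back produces exactly $\|\langle\epsilon D\rangle^{3/q}u_0\|_{L^2}=\|u_0\|_{H^{3/q}_\epsilon}$, the powers of $\epsilon$ canceling by the admissibility relation $\frac2q+\frac1r=\frac12$. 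The same computation applied to the Duhamel term gives the inhomogeneous statement. I expect the main obstacle to be not any single estimate but the careful verification that the exponents of $\epsilon$ cancel in both inequalities — in other words, that Strichartz admissibility is precisely the condition making the rescaled estimates $\epsilon$-uniform — together with a clean treatment of the low-frequency/large-time regime $|t|\gtrsim\langle N\rangle$ in the dispersive decay, where the Klein-Gordon phase behaves more like the Schrödinger phase and one must check the decay rate is still at least $|t|^{-1/2}$ so that the admissibility pair is unchanged.
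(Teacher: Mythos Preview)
Your approach is correct and rests on the same core ingredients as the paper's proof (Littlewood--Paley decomposition, a frequency-localized dispersive estimate coming from the second derivative of the phase, and Keel--Tao). The paper, however, is more direct and sidesteps your scaling step entirely: it simply computes $p_\epsilon''(\xi)=\langle 1+\epsilon\xi\rangle^{-3}$, observes that this is $\sim\langle\epsilon N\rangle^{-3}$ on the block $|\xi|\sim N$, and applies van der Corput directly to $S_\epsilon(t)P_N$ to obtain the dispersive bound with the $\langle\epsilon N\rangle^{3/2}$ loss already built in; Keel--Tao together with Littlewood--Paley then produces the $H_\epsilon^{3/q}$ weight on the right-hand side without any $\epsilon$-power bookkeeping. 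Your detour through the unrescaled flow also works, but two small points are worth noting. First, after undoing the modulation $e^{ix/\epsilon}$, the weight that naturally appears is $\langle 1+\epsilon D\rangle^{3/q}$ rather than $\langle\epsilon D\rangle^{3/q}$, so you still need the easy equivalence $\langle 1+\epsilon\xi\rangle\sim\langle\epsilon\xi\rangle$ to finish. Second, your concern about a separate low-frequency/large-time regime is unnecessary: the van der Corput lemma already delivers the $|t|^{-1/2}$ decay uniformly in $t$, so no regime splitting is required.
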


\begin{proof}
Since $p_\epsilon''(\epsilon\xi)=\frac{1}{\langle 1+\epsilon\xi\rangle^3}$, the van der Corput lemma deduces that 
$$\|S_\epsilon(t)P_Nu_0\|_{L_x^\infty}=\left\|\int_{\mathbb{R}}\left\{\int_{\mathbb{R}}e^{-itp_\epsilon(\xi)}e^{i(x-y)\xi} \chi(\tfrac{\xi}{N})dy\right\}u_0(y)dy\right\|_{L_x^\infty}\lesssim \frac{1}{\langle\epsilon N\rangle^{3/2}|t|^{1/2}}\|u_0\|_{L^1}.$$
Thus, the standard interpolation argument (see Keel-Tao \cite{KT} for instance), together with the Littlewood-Paley inequality, yields the desired inequalities.
\end{proof}

\begin{remark}
Strichartz estimates for the rescaled flow $S_\epsilon(t)$ require some regularities like the standard linear Klein-Gordon flow $e^{it\langle D\rangle}$. On the other hand, in the $\epsilon\to 0$ limit, they are closely related to the Strichartz estimates for the linear Schr\"odinger flow $e^{\frac{it}{4\sqrt{2}}\partial_x^2}$, i.e., 
$$\|e^{\frac{it}{4\sqrt{2}}\partial_x^2}u_0\|_{L_t^q(\mathbb{R}; L^r(\mathbb{R}))}\lesssim \|u_0\|_{L^2(\mathbb{R})},$$
$$\left\|\int_0^t e^{\frac{i(t-t_1)}{4\sqrt{2}}\partial_x^2}F(t_1)dt_1\right\|_{L_t^q(\mathbb{R}; L^r(\mathbb{R}))}\lesssim \|F\|_{L_t^{\tilde{q}'}(\mathbb{R}; L^{\tilde{r}'}(\mathbb{R}))}$$
as the rescaled Sobolev norm $\|\cdot\|_{W_\epsilon^{s,r}(\mathbb{R})}$ formally converges to the $L^{r}(\mathbb{R})$-norm.
\end{remark}

Next, we introduce two types of Fourier restriction norm spaces, namely the \textit{Bourgain} spaces \cite{B}, associated with the linear propagator $S_\epsilon(t)$. We define $X^{s,b}$ (resp., $X_\epsilon^{s,b}$) as the completion of Schwartz functions with respect to 
$$\|u\|_{X^{s,b}} := \|\langle\xi\rangle^s\langle\tau+p_\epsilon(\xi)\rangle^b \tilde{u}(\tau,\xi)\|_{L^2_{\tau,\xi}(\mathbb{R}\times\mathbb{R})}$$
$$\left(\textup{resp., }\|u\|_{X^{s,b}} := \|\langle\epsilon\xi\rangle^s\langle\tau+p_\epsilon(\xi)\rangle^b \tilde{u}(\tau,\xi)\|_{L^2_{\tau,\xi}(\mathbb{R}\times\mathbb{R})}\right),$$
where $\tilde{u}(\tau,\xi)$ denotes the space-time Fourier transform of $u(t,x)$. Then, given $T>0$, we define $X^{s,b}([-T,T])$ (resp., $X_\epsilon^{s,b}([-T,T])$) with the norm
$$\|u\|_{X^{s,b}([-T,T])}:=\inf \Big\{ \|v\|_{X^{s,b}}: v=u\textup{ on }[-T,T]\Big\}$$
$$\left(\textup{resp., }\|u\|_{X_\epsilon^{s,b}([-T,T])}:=\inf\Big\{ \|v\|_{X_\epsilon^{s,b}}: v=u\textup{ on }[-T,T]\Big\}\right).$$
Later, we will frequently use the following basic mapping properties (refer to \cite{B, Tao book}) and the trilinear estimate involving the Fourier restriction spaces. 
\begin{lemma}
$(i)$ (Inhomogeneous estimate) For any $b>\frac{1}{2}$, 
$$\left\|\int_0^t S_\epsilon(t-t_1)F(t_1)dt_1\right\|_{X^{0,b}([-T,T])}\lesssim \|F\|_{X^{0,-(1-b)}([-T,T])}.$$
$(ii)$ (Transference principle) If $b>\frac12$, $\frac2q+\frac1r=\frac12$ and $q,r\ge2$, then
$$\|u\|_{L^q_t([-T,T];L^r_x(\mathbb{R}))} \lesssim \|u\|_{X^{\frac{3}{q},b}_\epsilon([-T,T])}.$$ 	
\end{lemma}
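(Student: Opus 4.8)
The statement to prove consists of two standard mapping properties of the Fourier restriction ($X^{s,b}$) spaces adapted to the propagator $S_\epsilon(t)=e^{-itp_\epsilon(D)}$: an inhomogeneous (Duhamel) estimate and a transference principle. Both are classical facts due to Bourgain and systematized by Tao, and the only mild novelty here is tracking that the implicit constants are uniform in $\epsilon\in(0,1]$, which is automatic since none of the arguments depend on the specific form of $p_\epsilon$ beyond measurability.

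\begin{proof}
$(i)$ The plan is to reduce to the estimate on the whole line by choosing a good extension, then use the standard Duhamel $X^{s,b}$ lemma. First fix a smooth time cutoff $\eta\in C_c^\infty(\mathbb{R})$ with $\eta\equiv 1$ on $[-1,1]$ and $\mathrm{supp}\,\eta\subset[-2,2]$, and set $\eta_T(t)=\eta(t/T)$. Given $F$ on $[-T,T]$, pick an extension $\tilde F$ with $\|\tilde F\|_{X^{0,-(1-b)}}\leq 2\|F\|_{X^{0,-(1-b)}([-T,T])}$, and consider
$$v(t):=\eta_T(t)\int_0^t S_\epsilon(t-t_1)\eta_T(t_1)\tilde F(t_1)\,dt_1,$$
which agrees with the original Duhamel term on $[-T,T]$. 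Writing $S_\epsilon(t-t_1)=S_\epsilon(t)S_\epsilon(-t_1)$ and taking the space-time Fourier transform, one expands the time-truncated integral in the now-standard way (Tao's book, the lemma on the "Duhamel estimate"): the operator $G\mapsto \eta_T(t)\int_0^t S_\epsilon(t-t_1)G(t_1)\,dt_1$ maps $X^{0,-(1-b)}\to X^{0,b}$ with a constant depending only on $\eta$ and $b>\tfrac12$, uniformly in $\epsilon$ since the symbol $p_\epsilon$ enters only through the frequency-localized weight $\langle\tau+p_\epsilon(\xi)\rangle$ which is simply shifted by the free evolution. Taking the infimum over extensions gives the claim.

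$(ii)$ For the transference principle, the idea is to represent a general $X_\epsilon^{3/q,b}$ function as a superposition of modulated free solutions and apply the uniform Strichartz estimate from Lemma \ref{Strichartz estimates}. Concretely, given $u$ on $[-T,T]$, take an extension $v\in X_\epsilon^{3/q,b}$ with comparable norm, and write via Fourier inversion in time
$$v(t,x)=\frac{1}{2\pi}\int_{\mathbb{R}} e^{it\lambda}\,\big(S_\epsilon(t) g_\lambda\big)(x)\,d\lambda,\qquad \widehat{g_\lambda}(\xi):=e^{it p_\epsilon(\xi)}\widetilde{v}(\cdot,\xi)\big|_{\tau=\lambda-p_\epsilon(\xi)}\text{-type profile},$$
so that $\|g_\lambda\|_{H_\epsilon^{3/q}}$ is, up to the weight $\langle\lambda\rangle^{-b}$, the slice of the $X_\epsilon^{3/q,b}$ norm in $\lambda$. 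Applying $\|\cdot\|_{L_t^q L_x^r}$, Minkowski's inequality to pull the $\lambda$-integral out, the uniform homogeneous Strichartz bound $\|S_\epsilon(t)g_\lambda\|_{L_t^qL_x^r}\lesssim\|g_\lambda\|_{H_\epsilon^{3/q}}$, and finally Cauchy--Schwarz in $\lambda$ against $\langle\lambda\rangle^{-b}\in L^2_\lambda$ (which is where $b>\tfrac12$ is used), one obtains $\|v\|_{L_t^qL_x^r}\lesssim\|v\|_{X_\epsilon^{3/q,b}}$; restricting to $[-T,T]$ and optimizing over $v$ finishes the proof.

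The main point requiring care — though it is not really an obstacle, more a bookkeeping matter — is the $\epsilon$-uniformity: one must check that every step (the Duhamel expansion in $(i)$, the Strichartz input and the Cauchy--Schwarz in $\lambda$ in $(ii)$) produces constants independent of $\epsilon$. This holds because the $\epsilon$-dependence is confined either to the explicit uniform Strichartz constant already established in Lemma \ref{Strichartz estimates}, or to the modulation weight $\langle\tau+p_\epsilon(\xi)\rangle$, whose role in these abstract $X^{s,b}$ manipulations is insensitive to the particular dispersion relation. Thus both estimates follow from the general theory with uniform constants.
\end{proof}
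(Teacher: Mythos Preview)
Your sketch is correct and follows the standard route found in the references the paper cites. In fact, the paper does not prove this lemma at all: it merely states the two mapping properties and refers the reader to Bourgain \cite{B} and Tao's book \cite{Tao book}. Your outline --- time-localized Duhamel expansion for $(i)$, and the Fourier-in-time superposition of free solutions combined with Minkowski, the uniform Strichartz estimate (Lemma~\ref{Strichartz estimates}), and Cauchy--Schwarz in the modulation variable for $(ii)$ --- is exactly the argument one finds there, and your observation that $\epsilon$-uniformity is automatic (since $p_\epsilon$ enters only through the shift in the weight $\langle\tau+p_\epsilon(\xi)\rangle$ and through the already-uniform Strichartz constant) is the one point worth making explicit in this context.
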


\begin{lemma}[Trilinear estimate]\label{basic trilinear estimate}
For $b\in(\frac{1}{2},1]$, we have
$$\begin{aligned}
\|v_1v_2v_3\|_{X^{0,-(1-b)}([-T,T])}&\lesssim \|v_1v_2v_3\|_{L_t^{2b}(\mathbb{R};L_x^2(\mathbb{R})}\\
&\lesssim T^{\frac{1-b}{2b}}\|v_1\|_{X_\epsilon^{1,b}([-T,T])}\|v_2\|_{X_\epsilon^{1,b}([-T,T])}\|v_3\|_{X^{0,b}([-T,T])}.
\end{aligned}$$
\end{lemma}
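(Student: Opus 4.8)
The plan is to prove the trilinear estimate of Lemma~\ref{basic trilinear estimate} in two stages, matching the two displayed inequalities. For the first inequality, I would argue that the $X^{0,-(1-b)}$-norm is dominated by a suitable space-time Lebesgue norm. Write $b' = 1-b \in [0,\tfrac12)$, so that $2b$ is the conjugate-type exponent with $\tfrac{1}{2b} + \tfrac{1}{2(1-b)}\cdot\tfrac{?}{?}$... more directly: for $b > \tfrac12$ we have $1-b < \tfrac12$, hence $\langle \tau + p_\epsilon(\xi)\rangle^{-(1-b)} \in L^{\frac{1}{1-b}}_{\tau}$ locally is false, but the standard embedding $L^{p}_t \hookrightarrow X^{0,-(1-b)}$ holds whenever $\tfrac1p = \tfrac12 + (1-b)$, i.e. $p = \frac{2}{3-2b} = \frac{1}{\frac32 - b}$. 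Since $b \in (\tfrac12,1]$ gives $p \in [\tfrac23, 1)$... this needs care. The cleaner route, and the one I would actually take, is: by duality and Plancherel, $\|w\|_{X^{0,-(1-b)}} = \sup \{ |\langle w, g\rangle| : \|g\|_{X^{0,1-b}} \le 1\}$, and by Hölder in $(t,x)$ together with the transference-type bound $\|g\|_{L^{(2b)'}_t L^2_x} \lesssim \|g\|_{X^{0,1-b}}$ (valid since $1-b < \tfrac12$ and $(2b)' = \frac{2b}{2b-1} \ge 2$), one gets $|\langle w, g\rangle| \le \|w\|_{L^{2b}_t L^2_x}\|g\|_{L^{(2b)'}_t L^2_x} \lesssim \|w\|_{L^{2b}_t L^2_x}$. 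Restricting to $[-T,T]$ and passing to the infimum over extensions gives the first inequality with $w = v_1 v_2 v_3$.

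For the second inequality, I would estimate $\|v_1 v_2 v_3\|_{L^{2b}_t L^2_x([-T,T])}$ by Hölder in time and Strichartz-type (transference) bounds in space. Choose exponents so that the three factors land in admissible spaces: take $v_1, v_2$ in $L^{q}_t L^{r}_x$ with the admissible pair $\tfrac2q + \tfrac1r = \tfrac12$ and $v_3$ in $L^{\tilde q}_t L^{\tilde r}_x$, with a leftover factor of $L^{\rho}_t L^\infty_x$ handled by $T^{1/\rho}$ after another Hölder — or more simply, pick $q = \tilde q = 6$, $r = \tilde r = \infty$ is not admissible in 1D, so instead use the endpoint-avoiding choice $(q,r) = (8,4)$ three times: then $\tfrac{2}{8}+\tfrac14 = \tfrac12$ is admissible, $\tfrac{3}{8\cdot3} = \tfrac18$... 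Actually the natural choice is to put all three factors in $L^{8}_t L^4_x$, so that $\|v_1 v_2 v_3\|_{L^{8/3}_t L^{4/3}_x} \le \prod \|v_i\|_{L^8_t L^4_x}$; but we need $L^{2b}_t L^2_x$ on the left, with $2b \le 2$, so an additional Hölder in time produces $T^{\frac{1}{2b} - \frac{3}{8}}$ and one uses $L^2_x \hookleftarrow L^{4/3}_x$ on a bounded... no — $L^2$ does not embed in $L^{4/3}$ on $\mathbb{R}$. The correct split is: two factors in an admissible $L^q_t L^r_x$ and one in $L^\infty_t L^2_x$, then $\|v_1 v_2 v_3\|_{L^{q/2}_t L^2_x} \le \|v_1\|_{L^q_t L^r_x}\|v_2\|_{L^q_t L^r_x}\|v_3\|_{L^\infty_t L^\infty_x}$ — still wrong dimensionally. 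I would settle it by: put $v_3$ in $L^\infty_t L^2_x$ (controlled by $X^{0,b}$ since $b>\tfrac12$), and $v_1 v_2$ in $L^{2b}_t L^\infty_x$ via $v_1, v_2 \in L^{4b}_t L^\infty_x$; since $\tfrac{2}{4b} + \tfrac{1}{\infty} = \tfrac{1}{2b} \le \tfrac12$ is super-admissible, Hölder in time gives a factor $T^{\frac{1-b}{2b}}$ after comparing with the admissible exponent pair $(q,\infty)$ with $\tfrac2q = \tfrac12$, i.e. $q=4$, and $\|v_i\|_{L^4_t L^\infty_x} \lesssim \|v_i\|_{X^{3/4,b}_\epsilon} \lesssim \|v_i\|_{X^{1,b}_\epsilon}$ by the transference principle. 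Thus $\|v_1v_2v_3\|_{L^{2b}_t L^2_x} \le T^{\frac{1}{2b}-\frac14}\|v_1\|_{L^4_tL^\infty_x}\|v_2\|_{L^4_tL^\infty_x}\|v_3\|_{L^\infty_tL^2_x}$, and one checks $\tfrac{1}{2b}-\tfrac14 = \tfrac{2-b}{4b}$, which is slightly larger than $\tfrac{1-b}{2b} = \tfrac{2-2b}{4b}$; since $T \lesssim 1$ this only helps, but to match the stated exponent exactly I would instead split the time Hölder as $\tfrac{1}{2b} = \tfrac{1}{4} + \tfrac{1}{4} + \tfrac{1-b}{2b}$, absorbing the two $\tfrac14$'s into the $L^4_t$ norms of $v_1, v_2$ and the remaining $\tfrac{1-b}{2b}$ into the $T$-power, with $v_3$ estimated by the trivial $L^\infty_t L^2_x \subset X^{0,b}_\epsilon$ embedding. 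Finally, replace the full-line norms by their restricted-interval infima using that all embeddings and Strichartz bounds localize, yielding the $X^{\cdot,\cdot}([-T,T])$ versions.

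The key steps in order: (1) the duality-plus-transference argument giving $X^{0,-(1-b)} \supset L^{2b}_t L^2_x$ locally in time; (2) the Hölder-in-time split of the exponent $\tfrac{1}{2b}$ into two copies of $\tfrac14$ plus $\tfrac{1-b}{2b}$, extracting the $T^{\frac{1-b}{2b}}$ factor; (3) Strichartz/transference control of each $L^4_t L^\infty_x$ factor by $X^{3/4,b}_\epsilon \hookrightarrow X^{1,b}_\epsilon$, and the trivial $L^\infty_t L^2_x$ control of the third factor by $X^{0,b}$; (4) passage from $\mathbb{R}$-norms to $[-T,T]$-norms via infimum over extensions.

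I expect the main obstacle to be step~(3) — making the $\epsilon$-dependence of the Strichartz/transference constants genuinely uniform. The transference principle as stated in the excerpt already builds in the rescaled space $X^{3/q,b}_\epsilon$, and Lemma~\ref{Strichartz estimates} was proved with $\epsilon$-independent constants (the $\langle \epsilon N\rangle^{-3/2}$ gain is only favorable), so in fact this obstacle is largely neutralized by the earlier lemmas; the remaining subtlety is purely bookkeeping of which factors carry the full derivative weight $\langle \epsilon\xi\rangle$ versus none ($v_3$ sits in the unweighted $X^{0,b}$), and verifying that the product estimate in physical space does not require any regularity on the third factor — which is exactly why one places $v_3$ in $L^\infty_t L^2_x$ rather than in a genuine Strichartz space. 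A secondary, more cosmetic obstacle is reconciling the precise power $T^{\frac{1-b}{2b}}$ with the Hölder exponents; since any nonnegative power of $T$ with $T \lesssim 1$ suffices for the intended applications, I would present the clean split that produces exactly $\tfrac{1-b}{2b}$ and not belabor the alternatives.
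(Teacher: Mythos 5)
Your settled argument is correct and is essentially the paper's own proof: the first inequality via the embedding $\|g\|_{L_t^{2b/(2b-1)}L_x^2}\lesssim\|g\|_{X^{0,1-b}}$ (which the paper obtains by interpolating $X^{0,b}\hookrightarrow L_t^\infty L_x^2$ with $X^{0,0}=L_{t,x}^2$ and then dualizing, exactly the dual form of your duality-plus-transference step), and the second via the Hölder split $\tfrac{1}{2b}=\tfrac14+\tfrac14+\tfrac{1-b}{2b}$ with $v_1,v_2\in L_t^4L_x^\infty$ controlled by $X_\epsilon^{3/4,b}\supset X_\epsilon^{1,b}$ and $v_3\in L_t^\infty L_x^2$ controlled by $X^{0,b}$. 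The intermediate false starts in your write-up are harmless since the final choice of exponents is the correct one.
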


\begin{proof}
Interpolating the two basic inequalities $\|u\|_{L_t^\infty L_x^2}\lesssim \|u\|_{X_\epsilon^{0,b}}$ and $\|u\|_{L_t^2 L_x^2}\lesssim \|u\|_{X_\epsilon^{0,0}}$, we obtain 
$\|u\|_{L_t^{\frac{2b}{2b-1}}L_x^2}\lesssim\|u\|_{X_\epsilon^{0,1-b}}$ whose dual inequality is given by $\|u\|_{X_\epsilon^{0,-(1-b)}}\lesssim \|u\|_{L_t^{2b}L_x^2}$. Thus, by H\"older's inequality and Strichartz estimates, we prove that 
$$\begin{aligned}
\|v_1v_2v_3\|_{X^{0,-(1-b)}}&\lesssim\|v_1v_2v_3\|_{L_t^{2b}L_x^2}\lesssim T^{\frac{1-b}{2b}}\|v_1\|_{L_t^4L_x^\infty}\|v_2\|_{L_t^4L_x^\infty}\|v_3\|_{C_t L_x^2}\\
&\lesssim T^{\frac{1-b}{2b}}\|v_1\|_{X_\epsilon^{1,b}}\|v_2\|_{X_\epsilon^{1,b}}\|v_3\|_{X^{0,b}}.
\end{aligned}$$
\end{proof}

\section{Well-posedness of the nonlinear Klein-Gordon equation}\label{sec: well-posedness}

We present basic well-posedness results for the rescaled NLKG \eqref{rescaled NLKG} and the system \eqref{system eq}. Consequently, we confirm that $\psi_\epsilon(t,x)+r_\epsilon(t,x)+c.c.$, from \eqref{system eq}, is a unique solution to the NLKG \eqref{rescaled NLKG} with initial data $\big(2\textup{Re}(e^{\frac{ix}{\epsilon}}\psi_{\epsilon,0}), \tfrac{2}{\epsilon^2}\langle\epsilon\partial_x\rangle\textup{Im}(e^{\frac{ix}{\epsilon}}\psi_{\epsilon,0})\big)$ (see Remark \ref{NLKG system relation} below).

The well-posedness of the standard NLKG \eqref{NLKG} is well-known. For the rescaled one, it is rephrased as follows.

\begin{proposition}[Global well-posedness for the rescaled NLKG]\label{GWP for rescaled NLKG}
Let $\epsilon\in(0,1]$. Given initial data $(v_{\epsilon}(0), \partial_tv_{\epsilon}(0))\in H_\epsilon^1(\mathbb{R})\times L^2(\mathbb{R})$, there exists a unique global strong solution $v_\epsilon(t)\in C_t(\mathbb{R}; H_\epsilon^1)$ to the rescaled NLKG \eqref{rescaled NLKG}. Moreover, $v_\epsilon(t)$ preserves the energy
$$\mathcal{E}_\epsilon[v]=\frac{\epsilon^2}{2}\|\partial_x v\|_{L^2(\mathbb{R})}^2+\frac{1}{2}\|v\|_{L^2(\mathbb{R})}^2+\frac{\epsilon^4}{2}\|\partial_tv\|_{L^2(\mathbb{R})}^2+\frac{\epsilon^2}{4}\|v\|_{L^4(\mathbb{R})}^4.$$
\end{proposition}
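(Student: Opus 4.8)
The plan is to reformulate the rescaled NLKG \eqref{rescaled NLKG} as a first-order system in the variable $(v_\epsilon, \partial_t v_\epsilon)$ and then transfer the classical global well-posedness theory for the standard cubic NLKG \eqref{NLKG} by the explicit scaling $v_\epsilon(t,x)=\frac{1}{\epsilon}u_\epsilon(\frac{t}{\epsilon^2},\frac{x}{\epsilon})$. Concretely, if $u$ solves \eqref{NLKG} in $C_t(\mathbb{R};H^1)\cap C^1_t(\mathbb{R};L^2)$ with data in $H^1(\mathbb{R})\times L^2(\mathbb{R})$ — which is the well-known energy-subcritical theory in one dimension, where the $L^4$ nonlinearity is controlled by Sobolev embedding $H^1(\mathbb{R})\hookrightarrow L^\infty(\mathbb{R})$ and a contraction/Duhamel argument on a time interval depending only on the $H^1\times L^2$ norm of the data, then iterated globally using energy conservation $\mathcal{E}(u)$ — then a direct computation shows $v_\epsilon$ as defined solves \eqref{rescaled NLKG}, and conversely. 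I would first spell out this change of variables: one checks that $\partial_t^2 v_\epsilon = \frac{1}{\epsilon^5}(\partial_t^2 u_\epsilon)(\frac{t}{\epsilon^2},\cdot)$, $\partial_x^2 v_\epsilon = \frac{1}{\epsilon^3}(\partial_x^2 u_\epsilon)(\cdot,\frac{x}{\epsilon})$, and $v_\epsilon^3 = \frac{1}{\epsilon^3}u_\epsilon^3$, so that multiplying \eqref{NLKG} evaluated at the rescaled arguments by the appropriate power of $\epsilon$ yields exactly \eqref{rescaled NLKG}. The map on data is a linear isomorphism of $H^1_\epsilon(\mathbb{R})\times L^2(\mathbb{R})$ onto $H^1(\mathbb{R})\times L^2(\mathbb{R})$ (with $\epsilon$-dependent but finite constants), so existence and uniqueness transfer both ways for each fixed $\epsilon\in(0,1]$.

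The second step is the conserved energy. Rather than rederiving it from scratch, I would obtain $\mathcal{E}_\epsilon$ by substituting the scaling into $\mathcal{E}(u)$: since $\mathcal{E}(u_\epsilon)$ is constant in $\frac{t}{\epsilon^2}$, hence constant in $t$, and since $\mathcal{E}(u_\epsilon) = \epsilon^3\,\mathcal{E}_\epsilon[v_\epsilon]$ by a direct change of variables in each integral (the $L^2$ and $L^4$ terms pick up $\epsilon^{2}\cdot\epsilon$ and $\epsilon^4\cdot\epsilon$ respectively after accounting for the $\frac{1}{\epsilon}$ amplitude and the $dx$ Jacobian, the $\partial_x$ term an extra $\epsilon^2$, the $\partial_t$ term an extra $\epsilon^4$), conservation of $\mathcal{E}_\epsilon[v_\epsilon]$ follows. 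Alternatively one verifies $\frac{d}{dt}\mathcal{E}_\epsilon[v_\epsilon(t)]=0$ directly by pairing \eqref{rescaled NLKG} with $\epsilon^4\partial_t v_\epsilon$ and integrating by parts — this also justifies that the energy identity holds at the $H^1_\epsilon$ regularity level, which requires the usual approximation argument (mollify the data, use continuous dependence, pass to the limit).

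The main obstacle, and the only genuinely non-formal point, is confirming that the low-regularity solutions produced at the $H^1_\epsilon\times L^2$ level actually satisfy the energy conservation law exactly, and that the solution map is continuous in this topology — the contraction argument naturally gives a solution in a Strichartz-type space, and one must check it lies in $C_t(\mathbb{R};H^1_\epsilon)$ and that $\mathcal{E}_\epsilon$ is well-defined and conserved. This is standard for the one-dimensional cubic NLKG (it is energy-subcritical, and in fact one has persistence of regularity), so I would either cite the classical theory or sketch the mollification argument: approximate the data in $H^1_\epsilon\times L^2$, solve, use uniqueness and the a priori bound from $\mathcal{E}_\epsilon$ to get a uniform-in-time $H^1_\epsilon$ bound, and pass to the limit. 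Globalization is then immediate because $\mathcal{E}_\epsilon$ controls $\|v\|_{H^1_\epsilon}$ and $\|\partial_t v\|_{L^2}$ uniformly, ruling out finite-time blowup. Since $\epsilon$ is fixed throughout this proposition, no uniformity in $\epsilon$ is needed here, which keeps the argument routine.
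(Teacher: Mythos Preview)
Your proposal is correct and relies on the same ingredients as the paper: local well-posedness via the Sobolev embedding $H^1(\mathbb{R})\hookrightarrow L^\infty(\mathbb{R})$ together with a contraction/Duhamel argument, and globalization via the conserved energy. The only cosmetic difference is that the paper argues directly on the rescaled equation \eqref{rescaled NLKG} (obtaining a local existence time $T_\epsilon$ possibly depending on $\epsilon$, which is harmless since $\epsilon$ is fixed here), whereas you transfer the result from the standard NLKG \eqref{NLKG} through the explicit scaling; both routes are equivalent and standard.
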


\begin{proof}[Sketch of Proof]
The proof follows from a standard contraction mapping argument. Indeed, by the Sobolev embedding $H^1(\mathbb{R})\hookrightarrow L^\infty(\mathbb{R})$, one can show that the corresponding integral equation is locally well-posed in $C_t([-T_\epsilon,T_\epsilon]; H_\epsilon^1)$ for sufficiently small $T_\epsilon>0$ possibly depending on $\epsilon\in(0,1]$. Then, it can be upgraded to global well-posedness, because the conservation law prevents the solution from blowing up in finite time.
\end{proof}

\begin{remark}
As for well-posedness of the rescaled NLKG \eqref{rescaled NLKG}, it is not necessary to employ Strichartz estimates in the previous section. Nevertheless, if one uses $\epsilon$-depending inequalities, e.g., the Sobolev inequality $\|v\|_{L^\infty(\mathbb{R})}\lesssim \frac{1}{\epsilon}\|v\|_{H_\epsilon^1(\mathbb{R})}$, the size of the interval $[-T_\epsilon, T_\epsilon]$ in the contraction mapping argument would shrink to $\{0\}$ as $\epsilon\to 0$.
\end{remark}

Next, using Strichartz estimates, we establish well-posedness for the system \eqref{system eq}.

\begin{proposition}[Local well-posedness for the system \eqref{system eq}]\label{LWP for system}
Let $\epsilon\in(0,1]$ and $b\in(\frac{1}{2},1)$. We assume that $\{\psi_{\epsilon,0}\}_{\epsilon\in(0,1]}$ satisfies \textup{\textbf{(H1)}} for some $R>0$. Then, there exists $0<T\lesssim R^{-\frac{4b}{1-b}}$, independent of $\epsilon\in(0,1]$, such that the system \eqref{system eq} has a unique solution $(\psi_\epsilon(t), r_\epsilon(t))\in C_t([-T,T]; H_\epsilon^1(\mathbb{R})\times H_\epsilon^1(\mathbb{R}))$ with 
\begin{equation}\label{core bound}
\|\psi_\epsilon\|_{X_\epsilon^{1,b}([-T,T])}\lesssim R\quad\textup{and}\quad\|r_\epsilon\|_{X_\epsilon^{1,b}([-T,T])}\lesssim T^{\frac{1-b}{2b}}R^3.
\end{equation}
Moreover, we have
\begin{equation}\label{high Sobolev norm core bound}
\|\psi_\epsilon\|_{X^{s,b}([-T,T])}\lesssim \|\psi_{\epsilon,0}\|_{H^s(\mathbb{R})}.
\end{equation}
\end{proposition}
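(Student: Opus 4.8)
The plan is to run a contraction mapping argument for the coupled system \eqref{system eq} in the space $X_\epsilon^{1,b}([-T,T])\times X_\epsilon^{1,b}([-T,T])$ (with $b\in(\tfrac12,1)$), and then to bootstrap the high-regularity bound \eqref{high Sobolev norm core bound} separately for the decoupled $\psi_\epsilon$-equation. First I would set up the solution map $\Phi_\epsilon=(\Phi_\epsilon^{(1)},\Phi_\epsilon^{(2)})$ on a ball $\{\|\psi_\epsilon\|_{X_\epsilon^{1,b}}\le C R,\ \|r_\epsilon\|_{X_\epsilon^{1,b}}\le C T^{\frac{1-b}{2b}}R^3\}$, where $\Phi_\epsilon^{(1)}$ is the right-hand side of the first equation and $\Phi_\epsilon^{(2)}$ that of the second. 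The key analytic inputs are: the inhomogeneous Bourgain estimate (Lemma's part $(i)$, giving $\|\int_0^t S_\epsilon(t-t_1)F\|_{X_\epsilon^{0,b}}\lesssim\|F\|_{X_\epsilon^{0,-(1-b)}}$, and its obvious $W_\epsilon^{1,\cdot}$-weighted version since $\langle\epsilon D\rangle$ commutes with everything), the smoothing factor $\langle1+\epsilon D\rangle^{-1}$ which only helps (it is bounded by $1$), and the trilinear estimate Lemma \ref{basic trilinear estimate}, which is exactly calibrated to produce the $T^{\frac{1-b}{2b}}$ gain. The crucial observation making the $\epsilon$-uniformity work is that the phase modulations $e^{\pm\frac{2i}{\epsilon}(x-\frac{t_1}{\epsilon\sqrt2})}$, $e^{-\frac{4i}{\epsilon}(\cdots)}$ appearing in $\mathcal R_\epsilon(A_\epsilon)$ are spatial translations in frequency by $\mp\frac2\epsilon,-\frac4\epsilon$ composed with a fixed linear phase in $t$; modulation in $x$ is an isometry on every $X_\epsilon^{s,b}$ (it shifts the frequency variable but $\langle\epsilon\xi\rangle$ and $\langle\tau+p_\epsilon(\xi)\rangle$ together are invariant once one also shifts $\tau$ appropriately — more carefully, one checks that conjugating $S_\epsilon(t)$ by these modulations leaves the Bourgain norm controlled, using that $p_\epsilon$ was designed precisely so that this conjugation is again of the form $e^{-it\,\tilde p_\epsilon(D)}$ with a comparable dispersive symbol). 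So each cubic term in $\mathcal R_\epsilon$ obeys the same trilinear bound as $|\psi_\epsilon|^2\psi_\epsilon$ up to a harmless constant.

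Concretely, for $\Phi_\epsilon^{(1)}$ I would write, using Lemma's $(i)$ and Lemma \ref{basic trilinear estimate} with all three slots in $X_\epsilon^{1,b}$,
\[
\|\Phi_\epsilon^{(1)}(\psi_\epsilon)\|_{X_\epsilon^{1,b}([-T,T])}\lesssim R+T^{\frac{1-b}{2b}}\|\psi_\epsilon\|_{X_\epsilon^{1,b}([-T,T])}^3\lesssim R+CT^{\frac{1-b}{2b}}R^3,
\]
where the $R$ term comes from $\|S_\epsilon(t)\psi_{\epsilon,0}\|_{X_\epsilon^{1,b}([-T,T])}\lesssim\|\psi_{\epsilon,0}\|_{H_\epsilon^1}\le R$ (the standard free-solution embedding into restricted Bourgain space). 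Choosing $T\sim R^{-\frac{4b}{1-b}}$, i.e. $T^{\frac{1-b}{2b}}R^2\lesssim1$, closes the ball for $\psi_\epsilon$, and the analogous Lipschitz bound (differencing and using that the cubic map is locally Lipschitz in $X_\epsilon^{1,b}$ with the same $T$-power) gives the contraction. For $\Phi_\epsilon^{(2)}$, note the integrand is $\big(|A_\epsilon|^2A_\epsilon-|\psi_\epsilon|^2\psi_\epsilon\big)+\mathcal R_\epsilon(A_\epsilon)$ with $A_\epsilon=\psi_\epsilon+r_\epsilon$; expanding, every term is trilinear with at least one factor of $r_\epsilon$ \emph{or} is one of the $\mathcal R_\epsilon$-terms which are trilinear in $A_\epsilon=\psi_\epsilon+r_\epsilon$. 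Those containing $r_\epsilon$ are handled by the trilinear estimate and absorbed for $T$ small; the genuinely new contribution is $\mathcal R_\epsilon(\psi_\epsilon)$ (purely in the core profile), which is bounded by $CT^{\frac{1-b}{2b}}\|\psi_\epsilon\|_{X_\epsilon^{1,b}}^3\lesssim CT^{\frac{1-b}{2b}}R^3$. Hence $\|\Phi_\epsilon^{(2)}\|_{X_\epsilon^{1,b}}\lesssim T^{\frac{1-b}{2b}}R^3 + (\text{terms small in }T)\cdot\|r_\epsilon\|_{X_\epsilon^{1,b}}$, closing the second ball and giving \eqref{core bound}. Continuity in time, $(\psi_\epsilon,r_\epsilon)\in C_t([-T,T];H_\epsilon^1\times H_\epsilon^1)$, follows from $b>\tfrac12$ via the embedding $X_\epsilon^{1,b}([-T,T])\hookrightarrow C_t([-T,T];H_\epsilon^1)$.

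For the high-Sobolev bound \eqref{high Sobolev norm core bound}, I would observe that the first equation of \eqref{system eq} is a closed (decoupled) equation for $\psi_\epsilon$, and re-run the fixed-point argument in $X^{s,b}([-T,T])$ on the \emph{same} time interval $T$ already fixed above. Applying Lemma \ref{basic trilinear estimate} with the distinguished factor $v_3=\psi_\epsilon$ measured in $X^{s,b}$ and the other two factors in $X_\epsilon^{1,b}$ — which is legitimate since $s>0$ and $H^s\hookrightarrow$ needs no $\epsilon$-loss here; one just needs $\langle\xi\rangle^s$ to distribute, and the Leibniz-type fractional estimate lets the $s$-derivative land on one factor while the others sit at regularity $1$ in the $\epsilon$-scaled norm, which dominates $L^\infty_t L^2_x$ — one gets
\[
\|\psi_\epsilon\|_{X^{s,b}([-T,T])}\lesssim \|\psi_{\epsilon,0}\|_{H^s(\mathbb R)} + T^{\frac{1-b}{2b}}R^2\,\|\psi_\epsilon\|_{X^{s,b}([-T,T])},
\]
and since $T^{\frac{1-b}{2b}}R^2\lesssim1$ by the choice of $T$, the last term is absorbed, yielding \eqref{high Sobolev norm core bound}.

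The main obstacle I expect is not the contraction bookkeeping but verifying rigorously that the fast oscillatory modulations in $\mathcal R_\epsilon$ do not destroy $\epsilon$-uniformity: one must check that conjugating the propagator $S_\epsilon(t)$ by $e^{\frac{2ij}{\epsilon}(x-\frac{t}{\epsilon\sqrt2})}$ ($j=1,2$) again yields a propagator $e^{-it\tilde p_{\epsilon,j}(D)}$ whose symbol $\tilde p_{\epsilon,j}$ satisfies $\tilde p_{\epsilon,j}''(\epsilon\xi)\sim\langle\,\cdot\,\rangle^{-3}$ uniformly in $\epsilon$, so that the very same Strichartz and Bourgain-space estimates of Section \ref{sec: linear flow} apply to each modulated cubic term with $\epsilon$-independent constants — equivalently, that the frequency-localization weight $\langle\epsilon\xi\rangle$ is, up to a bounded factor, translation-invariant under the relevant $\pm\tfrac2\epsilon,-\tfrac4\epsilon$ shifts (it is: $\langle\epsilon\xi\rangle\sim\langle\epsilon\xi\pm2\rangle$ on the frequency supports actually produced, since those supports stay within $O(1)$ of the shift). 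Making this precise is the technical heart; everything else is the standard $X^{s,b}$ well-posedness machinery.
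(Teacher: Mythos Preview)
Your contraction-mapping outline is essentially the paper's proof, and it is correct. Two remarks are worth making.

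First, and most importantly, the ``main obstacle'' you identify is a non-issue, and your proposed fix (conjugating $S_\epsilon(t)$ by the modulations and re-deriving Strichartz estimates for the new symbols $\tilde p_{\epsilon,j}$) is unnecessary work. The point is that the trilinear estimate (Lemma~\ref{basic trilinear estimate}) factors through the physical-space norm $L_t^{2b}L_x^2$:
\[
\|F\|_{X^{0,-(1-b)}}\ \lesssim\ \|F\|_{L_t^{2b}L_x^2}.
\]
The right-hand side is trivially invariant under multiplication by any unimodular factor $e^{i\theta(t,x)}$ and under complex conjugation. Since moreover $\langle\epsilon D\rangle\langle 1+\epsilon D\rangle^{-1}$ is a uniformly bounded Fourier multiplier, every cubic term in $\mathcal R_\epsilon(A_\epsilon)$ is bounded in $X_\epsilon^{1,-(1-b)}$ by $\|A_\epsilon^{\sharp_1}A_\epsilon^{\sharp_2}A_\epsilon^{\sharp_3}\|_{L_t^{2b}L_x^2}$ (with $\sharp_j\in\{\text{id},\overline{\ \cdot\ }\}$), and from there H\"older and the transference principle give $T^{\frac{1-b}{2b}}\|A_\epsilon\|_{X_\epsilon^{1,b}}^3$ with $\epsilon$-independent constants. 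No propagator conjugation, no new symbol estimates. (Incidentally, your assertion that the space-time modulations are \emph{isometries} of $X_\epsilon^{s,b}$ is false --- neither the weight $\langle\epsilon\xi\rangle^s$ nor the modulation weight $\langle\tau+p_\epsilon(\xi)\rangle^b$ is translation-invariant --- but as just explained this does not matter.)

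Second, a minor structural difference: the paper solves the system sequentially rather than jointly. Since the $\psi_\epsilon$-equation is closed (decoupled from $r_\epsilon$), one first runs the contraction for $\Phi_1$ alone to obtain $\psi_\epsilon$ with $\|\psi_\epsilon\|_{X_\epsilon^{1,b}}\lesssim R$, and only then runs the contraction for $\Phi_2$ with $\psi_\epsilon$ frozen. This is marginally cleaner bookkeeping than your coupled fixed point, but of course equivalent. Your treatment of \eqref{high Sobolev norm core bound} matches the paper's.
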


\begin{proof}
We define
$$\Phi_1(\psi)(t):=S_\epsilon(t)\psi_{\epsilon,0}-\frac{3i}{2}\int_0^t S_\epsilon(t-t_1)\frac{1}{\langle 1+\epsilon D\rangle}|\psi|^2\psi(t_1)dt_1.$$
Then, by Strichartz estimates and the trilinear estimate (Lemma \ref{basic trilinear estimate}), one can show that 
$$\begin{aligned}
\|\Phi_1(\psi)\|_{X_\epsilon^{1,b}}&\leq cR+cT^{\frac{1-b}{2b}}\|\psi\|_{X_\epsilon^{1,b}}^3\leq cR+cT^{\frac{1-b}{2b}}(2cR)^3,\\
\|\Phi_1(\psi)-\Phi_1(\psi')\|_{X_\epsilon^{1,b}}&\leq c T^{\frac{1-b}{2b}}\big(\|\psi\|_{X_\epsilon^{1,b}}^2+\|\psi'\|_{X_\epsilon^{1,b}}^2\big)\|\psi-\psi'\|_{X_\epsilon^{1,b}}\\
&\leq 2c T^{\frac{1-b}{2b}}(2cR)^2\|\psi-\psi'\|_{X_\epsilon^{1,b}}\leq\frac{1}{2}\|\psi-\psi'\|_{X_\epsilon^{1,b}},
\end{aligned}$$
provided that $\|\psi\|_{X_\epsilon^{1,b}}, \|\psi'\|_{X_\epsilon^{1,b}}\leq 2cR$. We take small $T>0$ such that $c T^{\frac{1-b}{2b}}(2cR)^2\leq\frac{1}{4}$. Then, it follows that $\Phi_1$ is contractive on $\{\psi\in X_\epsilon^{1,b}: \|\psi\|_{X_\epsilon^{1,b}}\leq 2cR\}$, and it has a unique fixed point $\psi_\epsilon$ with $\|\psi_\epsilon\|_{X_\epsilon^{1,b}}\leq 2cR$.

Next, we consider
$$\begin{aligned}
\Phi_2(r)(t):&=-\frac{3i}{2}\int_0^t S_\epsilon(t)\frac{1}{\langle 1+\epsilon D\rangle}\left(|\psi_\epsilon+r|^2(\psi_\epsilon+r)-|\psi_\epsilon|^2\psi_\epsilon+\mathcal{R}_\epsilon(\psi_\epsilon+r)\right)(t_1)dt_1.
\end{aligned}$$
We again use Strichartz estimates and the trilinear estimate (Lemma \ref{basic trilinear estimate}) to obtain 
$$\begin{aligned}
\|\Phi_2(r)\|_{X_\epsilon^{1,b}}&\leq c T^{\frac{1-b}{2b}}\big(\|\psi_\epsilon\|_{X_\epsilon^{1,b}}^3+\|r\|_{X_\epsilon^{1,b}}^3\big)\leq 2c T^{\frac{1-b}{2b}}(2cR)^3,\\
\|\Phi_2(r)-\Phi_2(r')\|_{X_\epsilon^{1,b}}&\leq c T^{\frac{1-b}{2b}}\big(\|\psi_\epsilon\|_{X_\epsilon^{1,b}}^2+\|r\|_{X_\epsilon^{1,b}}^2+\|r'\|_{X_\epsilon^{1,b}}^2\big)\|r-r'\|_{X_\epsilon^{1,b}}\\
&\leq 3c T^{\frac{1-b}{2b}}(2cR)^2\|r-r'\|_{X_\epsilon^{1,b}}\leq\frac{3}{4}\|r-r'\|_{X_\epsilon^{1,b}}
\end{aligned}$$
if $\|r\|_{X_\epsilon^{1,b}}, \|r'\|_{X_\epsilon^{1,b}}\leq 2c T^{\frac{1-b}{2b}}(2cR)^3\leq cR$. Therefore, $r=\Phi_2(r)$ has a unique solution $r_\epsilon$ with $\|r_\epsilon\|_{X_\epsilon^{1,b}}\leq 2c T^{\frac{1-b}{2b}}(2cR)^3$. 

It remains to show \eqref{high Sobolev norm core bound}. Indeed, by Strichartz estimates, we have
 $$\|\psi_\epsilon\|_{X^{s,b}}\lesssim \|\psi_{\epsilon,0}\|_{H^s}+\||\psi_\epsilon|^2\psi_\epsilon\|_{X^{s,-(1-b)}}.$$
Slightly modifying the proof of Lemma \ref{basic trilinear estimate}, one can show that 
\begin{equation}\label{high Sobolev norm core nonlinear bound}
\begin{aligned}
\||\psi_\epsilon|^2\psi_\epsilon\|_{X^{s,-(1-b)}}&\lesssim \||\psi_\epsilon|^2\psi_\epsilon\|_{L_t^{2b}H_x^{s}}\lesssim T^{\frac{1-b}{2b}}\|\psi_\epsilon\|_{L_t^4L_x^\infty}^2\|\psi_\epsilon\|_{C_tH_x^{s}}\\
&\lesssim T^{\frac{1-b}{2b}}\|\psi_\epsilon\|_{X_\epsilon^{1,b}}^2\|\psi_\epsilon\|_{X^{s,b}}\lesssim T^{\frac{1-b}{2b}}R^2\|\psi_\epsilon\|_{X^{s,b}}.
\end{aligned}
\end{equation}
Thus, taking smaller $T\ll R^{-\frac{4b}{1-b}}$ if necessary, we obtain $\|\psi_\epsilon\|_{X^{s,b}}\lesssim \|\psi_{\epsilon,0}\|_{H^s}$.
\end{proof}

\begin{remark}\label{NLKG system relation}
From the construction, summing two equations in the system \eqref{system eq} and their c.c.'s,  one can see that $\psi_\epsilon(t,x)+r_\epsilon(t,x)+c.c.$ is a strong solution to the rescaled NLKG \eqref{rescaled NLKG} and that it is included in $C_t([-T,T];H_\epsilon^1(\mathbb{R}))$ by the transference principle. Therefore, by uniqueness, the solution constructed from the system \eqref{system eq} is a unique strong solution to the rescaled NLKG  with initial data $(2\textup{Re}(e^{\frac{ix}{\epsilon}}\psi_{\epsilon,0}), \frac{2}{\epsilon^2}\langle\epsilon\partial_x\rangle\textup{Im}(e^{\frac{ix}{\epsilon}}\psi_{\epsilon,0}))$.
\end{remark}

\begin{remark}
We have an analogous well-posedness result for the NLS
\begin{equation}\label{NLS}
i\partial_t\psi^{\textup{(NLS)}}+\frac{1}{4\sqrt{2}}\partial_x^2\psi^{\textup{(NLS)}}-\frac{3}{2\sqrt{2}}|\psi^{\textup{(NLS)}}|^2\psi^{\textup{(NLS)}}=0.
\end{equation}
Indeed, by the standard argument (see Cazenave \cite{Cazenave}) but with the Strichartz estimate
$$\|e^{\frac{it}{4\sqrt{2}}\partial_x^2}u_0\|_{C_t(\mathbb{R}; L^2(\mathbb{R}))\cap L_t^4(\mathbb{R}; L_x^\infty(\mathbb{R}))}\lesssim\|u_0\|_{L^2(\mathbb{R})},$$
one can show that there exists $T>0$, independent of $\epsilon\in(0,1]$, such that the following hold.\\
$(i)$ If \textup{\textbf{(H1)}} holds, the solution $\psi_\epsilon^{\textup{(NLS)}}\in C_t([-T,T];H_\epsilon^1(\mathbb{R}))$ to the NLS \eqref{NLS} with initial data $\psi_{\epsilon,0}$ exists, and
\begin{equation}\label{NLS property 1}
\|\psi_\epsilon^{\textup{(NLS)}}\|_{C_t([-T,T]; H_\epsilon^1(\mathbb{R}))\cap L_t^4([-T,T]; L^\infty(\mathbb{R}))}\lesssim \|\psi_{\epsilon,0}\|_{H_\epsilon^1(\mathbb{R})}.
\end{equation}
$(ii)$ If we further assume \textup{\textbf{(H2')}} for some $s>0$, then
\begin{equation}\label{NLS property 2}
\|\psi_\epsilon^{\textup{(NLS)}}\|_{C_t([-T,T]; H^s(\mathbb{R}))}+\big\||\psi_\epsilon^{\textup{(NLS)}}|^2\psi_\epsilon^{\textup{(NLS)}}\big\|_{L_t^1([-T,T]; H^s(\mathbb{R}))}\lesssim \|\psi_{\epsilon,0}\|_{H^s(\mathbb{R})}.
\end{equation}

\end{remark}

\section{High frequency estimate for the core profile}\label{sec: core profile}

We recall that the convergence of the linear flow $S_\epsilon(t)$ requires smallness of high frequencies (see Lemma \ref{linear flow convergence} and Remark \ref{remark: linear flow convergence}). Thus, one may expect that a similar estimate would be needed for the nonlinear problem. In this section, we prove the following proposition for the core profile. 

\begin{proposition}[High frequency estimates for the core profile]\label{core high frequency}
Let $\epsilon\in(0,1]$ and $b\in(\frac{1}{2},1)$. We assume \textbf{\textup{(H1)}} and \textbf{\textup{(H2)}} for $\{\psi_{\epsilon,0}\}_{\epsilon\in(0,1]}$, and let $(\psi_\epsilon,r_\epsilon)\in C_t([-T,T];H_\epsilon^1(\mathbb{R})\times H_\epsilon^1(\mathbb{R}))$ be the solution  to the system \eqref{system eq}. Then,  for any $\delta>0$, we have
$$\lim_{\epsilon\to 0}\left(\|P_{>\delta\epsilon^{-1/3}}\psi_\epsilon\|_{X_\epsilon^{1,b}([-T,T])}+\|P_{>\delta\epsilon^{-1/3}}(|\psi_\epsilon|^2\psi_\epsilon)\|_{L_t^{2b}([-T,T]; L_x^2(\mathbb{R}))}\right)=0,$$
where $P_{>N}=\sum_{M>\frac{N}{2}}P_M$. 
\end{proposition}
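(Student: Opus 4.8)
The plan is to run a bootstrap/iteration argument on the Duhamel formula for $\psi_\epsilon$, tracking how the high-frequency cut-off $P_{>\delta\epsilon^{-1/3}}$ propagates through the fixed-point map $\Phi_1$. Write $\psi_\epsilon = S_\epsilon(t)\psi_{\epsilon,0} + (\text{Duhamel term})$. Applying $P_{>\delta\epsilon^{-1/3}}$ and using that $S_\epsilon(t)$ commutes with Fourier multipliers, the linear contribution is $S_\epsilon(t)P_{>\delta\epsilon^{-1/3}}\psi_{\epsilon,0}$, whose $X_\epsilon^{1,b}([-T,T])$-norm is controlled by $\|P_{>\delta\epsilon^{-1/3}}\psi_{\epsilon,0}\|_{H_\epsilon^1(\mathbb{R})}$ (via the standard energy estimate for $X^{s,b}$-spaces built on $S_\epsilon$), which tends to $0$ as $\epsilon\to 0$ by hypothesis \textbf{(H2)}. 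The crux is therefore the nonlinear term.

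For the nonlinear term, I would split the cubic output by frequency interaction. Since a product $\psi_\epsilon\bar\psi_\epsilon\psi_\epsilon$ can only have output frequency $>\delta\epsilon^{-1/3}$ if at least one input factor has frequency $\gtrsim \delta\epsilon^{-1/3}/3$ (pigeonhole on the three frequencies summing to the output), I would write, schematically,
$$
P_{>\delta\epsilon^{-1/3}}\big(|\psi_\epsilon|^2\psi_\epsilon\big) = P_{>\delta\epsilon^{-1/3}}\Big(\sum_{j=1}^{3} (\text{term with }j\text{-th factor replaced by }P_{>\frac{\delta}{6}\epsilon^{-1/3}}\psi_\epsilon)\Big),
$$
up to symmetrization. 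Then the trilinear estimate (Lemma \ref{basic trilinear estimate}), applied with the high-frequency factor placed in the $X_\epsilon^{1,b}$ slot and the other two factors in $X_\epsilon^{1,b}$ (or $X^{0,b}$), yields
$$
\|P_{>\delta\epsilon^{-1/3}}(|\psi_\epsilon|^2\psi_\epsilon)\|_{X_\epsilon^{0,-(1-b)}([-T,T])} \lesssim T^{\frac{1-b}{2b}}\,\|\psi_\epsilon\|_{X_\epsilon^{1,b}([-T,T])}^2\,\|P_{>\frac{\delta}{6}\epsilon^{-1/3}}\psi_\epsilon\|_{X_\epsilon^{1,b}([-T,T])}.
$$
Using the uniform bound $\|\psi_\epsilon\|_{X_\epsilon^{1,b}}\lesssim R$ from Proposition \ref{LWP for system}, and feeding this back through the inhomogeneous Strichartz/$X^{s,b}$ estimate, one gets a closed inequality of the form
$$
\|P_{>\delta\epsilon^{-1/3}}\psi_\epsilon\|_{X_\epsilon^{1,b}} \lesssim \|P_{>\delta\epsilon^{-1/3}}\psi_{\epsilon,0}\|_{H_\epsilon^1} + T^{\frac{1-b}{2b}}R^2\,\|P_{>\frac{\delta}{6}\epsilon^{-1/3}}\psi_\epsilon\|_{X_\epsilon^{1,b}}.
$$
The right-hand side still has a high-frequency norm of $\psi_\epsilon$ at a slightly smaller cut-off level, so I cannot immediately close it by absorbing into the left side (the constant in front is not necessarily $<1$ for the $T$ fixed in Proposition \ref{LWP for system}). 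There are two clean ways around this: either (a) iterate the splitting finitely many times, at each step lowering the cut-off by a fixed multiplicative factor but keeping it $\gtrsim \delta' \epsilon^{-1/3}$ for a fixed $\delta'>0$, until the accumulated smallness from $T^{\frac{1-b}{2b}}R^2$ combined with the residual bound $\lesssim R$ gives $o(1)$; or, more robustly, (b) subdivide $[-T,T]$ into $O(1)$ subintervals on each of which $T_{\text{loc}}^{\frac{1-b}{2b}}R^2 \le \tfrac12$, so that on each subinterval the inequality genuinely closes by absorption, giving $\|P_{>\delta\epsilon^{-1/3}}\psi_\epsilon\|_{X_\epsilon^{1,b}(I_1)}\lesssim \|P_{>\frac{\delta}{2}\epsilon^{-1/3}}\psi_\epsilon(\text{data at start of }I_1)\|_{H_\epsilon^1} + \|P_{>\frac{\delta}{6}\epsilon^{-1/3}}(\text{lower-order pieces})\|$, and then propagate the smallness across the $O(1)$ subintervals by induction, adjusting $\delta$ down by a fixed factor at each interval. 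Either way, since there are only finitely many steps and at every stage the cut-off stays of the form $c\,\epsilon^{-1/3}$ with $c>0$ fixed, \textbf{(H2)} applied to the (first-interval) data kills the seed term as $\epsilon\to 0$.

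Finally, the statement about $P_{>\delta\epsilon^{-1/3}}(|\psi_\epsilon|^2\psi_\epsilon)$ in $L_t^{2b}L_x^2$ follows immediately from the same frequency-splitting: bound it by the sum of three terms each of which, by the $L_t^4L_x^\infty\times L_t^4L_x^\infty\times C_tL_x^2$ Hölder argument used in Lemma \ref{basic trilinear estimate}, is controlled by $T^{\frac{1-b}{2b}}R^2\,\|P_{>\frac{\delta}{6}\epsilon^{-1/3}}\psi_\epsilon\|_{X_\epsilon^{1,b}([-T,T])}$, which we have just shown is $o(1)$. The main obstacle, as flagged above, is the self-referential structure of the high-frequency estimate — the cubic nonlinearity sends a high-frequency input to a high-frequency output but also mixes in low frequencies, so one must organize the argument (via finitely many iterations or a subinterval decomposition) so that the smallness genuinely closes rather than chasing its own tail; I expect this bookkeeping, rather than any single estimate, to be the delicate point.
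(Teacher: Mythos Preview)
Your proposal is correct and would close, but the paper takes a different route that avoids the $\delta\mapsto\delta/6$ loss altogether. Instead of working with the sharp projection $P_{>N}$, the paper introduces the Fourier multiplier $m_N(D)$ with symbol $m_N(\xi)=\min(|\xi|/N,1)$, which equals $1$ on $|\xi|\ge N$ (so it dominates $P_{>N}$) but behaves like the differential operator $|D|/N$ on low frequencies. The point is that when $m_N(D)$ hits a product of low-frequency inputs, one can invoke the Leibniz rule for $|D|/N$, and when at least one input is high-frequency the trivial bound $m_N\le 1$ suffices; in either case the nonlinear term is controlled by $T^{\frac{1-b}{2b}}\|\psi_\epsilon\|_{X_\epsilon^{1,b}}^2\|m_N(D)\psi_\epsilon\|_{X^{0,b}}$ with the \emph{same} multiplier on both sides. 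This closes in one step by absorption (for the $T$ of the local theory, possibly shrunk once), and then one checks $\|m_N(D)\psi_{\epsilon,0}\|_{L^2}\to 0$ from \textbf{(H2)}. Your approach is more elementary in that it uses only the objects already in the statement, at the price of the finite-iteration bookkeeping you flag; the paper's multiplier trick buys a cleaner one-shot closure. Note that your option~(b) alone does not remove the cut-off mismatch (subdividing shrinks the constant but the $\delta$ versus $\delta/6$ discrepancy persists on each subinterval), so it is really option~(a)---iterating finitely many times and using the uniform bound $\lesssim R$ plus $C<1$ to kill the tail---that carries the argument; shrinking $T$ merely ensures $C<1$.
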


For the proof, one may try to estimate the $\psi_\epsilon$-equation in the system \eqref{system eq} putting the high frequency projection $P_{>\delta\epsilon^{-1/3}}$. However, one would immediately realize that the frequency cut-off does not work properly, because frequency cut-offs are blurred in products. Indeed, the difference between $P_{>\delta\epsilon^{-1/3}}(|\psi_\epsilon|^2\psi_\epsilon)$ and $|P_{>\delta\epsilon^{-1/3}}\psi_\epsilon|^2P_{>\delta\epsilon^{-1/3}}\psi_\epsilon$ is nonzero in general.

To resolve the technical issue, we employ the operator $m_N(D)$ with multiplier
$$m_N(\xi):=\left\{\begin{aligned}
&\frac{|\xi|}{N} &&\textup{if }|\xi|\leq N,\\
&1 &&\textup{if }|\xi|\geq N.
\end{aligned}\right.$$

\begin{remark}\label{m operator properties}
$(i)$ $m_N(D)$ acts like a high frequency cut-off in the sense that $m_N(\xi)\equiv 1$ for high frequencies $|\xi|\geq N$ but it is arbitrarily small in very low frequencies, i.e., $m_N(\xi)\leq \delta$ if $|\xi|\leq\delta N$ for any small $\delta>0$.\\
$(ii)$ $m_N(\xi_1)$ and $m_N(\xi_2)$ are comparable if $|\xi_1|\sim|\xi_2|$, i.e., $m_N(\xi)\sim m_N(2\xi)$ for all $\xi$.  For this reason, $m_N(D)$ has an advantage in handling the blurring effect in a product.\\
$(iii)$ $m_N(D)$ acts like the differential operator $\frac{1}{N}|\partial_x|$ in low frequencies $|\xi|\leq N$.
\end{remark}

\begin{proof}[Proof of Proposition \ref{core high frequency}]
We claim that
\begin{equation}\label{high frequency estimate claim}
\begin{aligned}
\|m_N(D)\psi_\epsilon\|_{X_\epsilon^{1,b}}&\lesssim \|m_N(D)\psi_{\epsilon,0}\|_{H_\epsilon^1},\\
\|m_N(D)(|\psi_\epsilon|^2\psi_\epsilon)\|_{L_t^{2b}L_x^2}&\lesssim \|m_N(D)\psi_{\epsilon,0}\|_{H_\epsilon^1}.
\end{aligned}
\end{equation}
Indeed, applying the Fourier multiplier $m_N(D)$ to the $\psi_\epsilon$-equation in the system \eqref{system eq} and then estimating by Strichartz estimates, we obtain
\begin{equation}\label{high frequency estimate claim proof}
\begin{aligned}
\|m_N(D)\psi_\epsilon\|_{X_\epsilon^{1,b}}&\lesssim \|m_N(D)\psi_{\epsilon,0}\|_{H_\epsilon^1}+\|m_N(D)(|\psi_\epsilon|^2\psi_\epsilon)\|_{X^{0,-(1-b)}}\\
&\lesssim \|m_N(D)\psi_{\epsilon,0}\|_{H_\epsilon^1}+\|m_N(D)(|\psi_\epsilon|^2\psi_\epsilon)\|_{L_t^{2b}L_x^2}.
\end{aligned}
\end{equation}
For the nonlinear term, we decompose 
$$\begin{aligned}
m_N(D)\big(|\psi_\epsilon|^2\psi_\epsilon\big)&=m_N(D)\big(|P_{\leq N}\psi_\epsilon|^2P_{\leq N}\psi_\epsilon\big)+m_N(D)\big(|\psi_\epsilon|^2\psi_\epsilon-|P_{\leq N}\psi_\epsilon|^2P_{\leq N}\psi_\epsilon\big)\\
&=:I+I\!I,
\end{aligned}$$
where $P_{\leq N}=\sum_{M\leq \frac{N}{2}}P_M$. For the former term $I$, we note that $|P_{\leq N}\psi_\epsilon|^2P_{\leq N}\psi_\epsilon$ is localized in $|\xi|\leq 6N$ on the Fourier side, where $m_N(D)$ behaves like $\sim\frac{|D|}{N}$ (see Remark \ref{m operator properties} $(ii)$ and $(iii)$). Thus, the Leibniz rule and the trilinear estimate (Lemma \ref{basic trilinear estimate}) yield
$$\begin{aligned}
\|I\|_{L_t^{2b}L_x^2}&\sim \frac{1}{N}\|\partial_x(|P_{\leq N}\psi_\epsilon|^2P_{\leq N}\psi_\epsilon)\|_{L_t^{2b}L_x^2}\lesssim T^{\frac{1-b}{2b}} \|\psi_\epsilon\|_{X_\epsilon^{1,b}}^2\frac{1}{N}\|\partial_xP_{\leq N}\psi_\epsilon\|_{X^{0,b}}\\
&\lesssim T^{\frac{1-b}{2b}} \|m_N(D)\psi_\epsilon\|_{X^{0,b}}.
\end{aligned}$$
For the latter term $I\!I$, using the trivial bound $m_N(\xi)\leq 1$ and the trilinear estimate (Lemma \ref{basic trilinear estimate}), we show that 
$$\begin{aligned}
\|I\!I\|_{L_t^{2b}L_x^2}&\leq\big\||\psi_\epsilon|^2\psi_\epsilon-|P_{\leq N}\psi_\epsilon|^2P_{\leq N}\psi_\epsilon\big\|_{L_t^{2b}L_x^2}\lesssim T^{\frac{1-b}{2b}} \|\psi_\epsilon\|_{X_\epsilon^{1,b}}^2\|P_{>N}\psi_\epsilon\|_{X^{0,b}}\\
&\sim T^{\frac{1-b}{2b}}\|m_N(D)\psi_\epsilon\|_{X^{0,b}},
\end{aligned}$$
where $m_N(\xi)\sim1$ for $|\xi|\gtrsim N$ is used in the last step. Hence, taking smaller $T>0$ if necessary, we prove that 
$$\|m_N(D)(|\psi_\epsilon|^2\psi_\epsilon)\|_{L_t^{2b}L_x^2}\leq \frac{1}{2}\|m_N(D)\psi_\epsilon\|_{X^{0,b}},$$
from which \eqref{high frequency estimate claim} follows (see \eqref{high frequency estimate claim proof}).

Now, we fix $\delta>0$ and set $N=\delta\epsilon^{-1/3}$. Then, by the claim \eqref{high frequency estimate claim}, it suffices to show that $\|m_{\delta\epsilon^{-1/3}}(D)\psi_{\epsilon,0}\|_{L^2}\to 0$ (see Remark \ref{m operator properties} $(i)$). Indeed, for any $\delta_0\in (0,\delta]$, we have 
$$\begin{aligned}
\|m_{\delta\epsilon^{-1/3}}(D)\psi_{\epsilon,0}\|_{L^2}&\leq \|m_{\delta\epsilon^{-1/3}}(D)P_{\leq\delta_0\epsilon^{-1/3}}\psi_{\epsilon,0}\|_{L^2}+\|m_{\delta\epsilon^{-1/3}}(D)P_{>\delta_0\epsilon^{-1/3}}\psi_{\epsilon,0}\|_{L^2}\\
&\lesssim\frac{\delta_0}{\delta}\|\psi_{\epsilon,0}\|_{L^2}+\|P_{>\delta_0\epsilon^{-1/3}}\psi_{\epsilon,0}\|_{L^2}.
\end{aligned}$$
where we used the properties of $m_N(D)$ in the last step (see Remark \ref{m operator properties}). Consequently, by \textup{\textbf{(H1)}} and \textup{\textbf{(H2)}}, we obtain $\|m_{\delta\epsilon^{-1/3}}(D)\psi_{\epsilon,0}\|_{L^2}\lesssim \frac{\delta_0}{\delta}+o_\epsilon(1)$. However, since $\delta_0>0$ is arbitrarily,  this completes the proof of the proposition.
\end{proof}

\begin{remark}
Repeating the proof of Proposition \ref{core high frequency} to the NLS \eqref{NLS}, one can show that for any $\delta>0$,  
\begin{equation}\label{NLS property 3}
\lim_{\epsilon\to 0}\|P_{>\delta\epsilon^{-1/3}}(|\psi_\epsilon^{\textup{(NLS)}}|^2\psi_\epsilon^{\textup{(NLS)}})\|_{L_t^1([-T,T]; H_\epsilon^1(\mathbb{R}))}=0.
\end{equation}
Indeed, the proof of \eqref{NLS property 3} is easier, since one may employ the simpler Strichartz norm $\|\cdot\|_{C_t([-T,T]; H_\epsilon^1(\mathbb{R}))\cap L_t^4([-T,T]; L_x^\infty(\mathbb{R}))}$ rather than the Fourier restriction norm.
\end{remark}

\section{Remainder estimate}\label{sec: remainder}

In Proposition \ref{LWP for system}, a preliminary bound $\|r_\epsilon\|_{X_\epsilon^{1,b}([-T,T])}\lesssim T^{\frac{1-b}{2b}}R^3$ is obtained for the remainder term. In this section, we show that the remainder vanishes as $\epsilon\to 0$. 

\begin{proposition}[Remainder estimate]\label{remainder estimate}
Let $\epsilon\in(0,1]$. We assume \textbf{\textup{(H1)}} and \textbf{\textup{(H2)}} for $\{\psi_{\epsilon,0}\}_{\epsilon\in(0,1]}$, and let $(\psi_\epsilon,r_\epsilon)\in C_t([-T,T];H_\epsilon^1(\mathbb{R})\times H_\epsilon^1(\mathbb{R}))$ be the solution  to the system \eqref{system eq}. Then, we have 
\begin{equation}\label{remainder estimate; eq1}
\lim_{\epsilon\to 0}\|r_\epsilon(t)\|_{C_t([-T,T];H_\epsilon^1(\mathbb{R}))}=0.
\end{equation}
If we further assume \textbf{\textup{(H2')}} for some $s>0$, then
\begin{equation}\label{remainder estimate; eq2}
\|r_\epsilon(t)\|_{C_t([-T,T];H_\epsilon^1(\mathbb{R}))}\lesssim\epsilon^{\min\{s,1-\eta\}}\quad\textup{for any small }\eta>0.
\end{equation}
\end{proposition}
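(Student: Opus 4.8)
The plan is to close a coupled bootstrap for $r_\epsilon$ in $X_\epsilon^{1,b}([-T,T])$, starting from the a priori bounds of Proposition \ref{LWP for system} ($\|\psi_\epsilon+r_\epsilon\|_{X_\epsilon^{1,b}([-T,T])}\lesssim R$ and $\|r_\epsilon\|_{X_\epsilon^{1,b}([-T,T])}\lesssim T^{\frac{1-b}{2b}}R^3$); since $X_\epsilon^{1,b}\hookrightarrow C_tH_\epsilon^1$ it suffices to estimate $\|r_\epsilon\|_{X_\epsilon^{1,b}([-T,T])}$. Write the forcing in the $r_\epsilon$-equation of \eqref{system eq} as $\bigl(|A_\epsilon|^2A_\epsilon-|\psi_\epsilon|^2\psi_\epsilon\bigr)+\mathcal R_\epsilon(A_\epsilon)$ with $A_\epsilon=\psi_\epsilon+r_\epsilon$. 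Every monomial of the first bracket carries a factor $r_\epsilon$, so by the inhomogeneous Strichartz estimate and the trilinear estimate (Lemma \ref{basic trilinear estimate}) its Duhamel contribution is $\lesssim T^{\frac{1-b}{2b}}R^2\|r_\epsilon\|_{X_\epsilon^{1,b}}$, which we absorb into the left side after shrinking $T$ as in Proposition \ref{LWP for system}. Hence everything reduces to bounding $\mathcal D_\epsilon:=\int_0^tS_\epsilon(t-t_1)\langle1+\epsilon D\rangle^{-1}\mathcal R_\epsilon(A_\epsilon)(t_1)\,dt_1$ in $X_\epsilon^{1,b}$.

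The key structural fact is that the three ``higher harmonics'' in $\mathcal R_\epsilon(A_\epsilon)$ are \emph{non-resonant}. Each has the form $e^{\frac{ic}{\epsilon}(x-\frac{t}{\epsilon\sqrt2})}B_c$ with $c\in\{2,-2,-4\}$ and $B_c$ a cubic monomial in $A_\epsilon,\bar A_\epsilon$, and on the Fourier side the corresponding piece of $\mathcal D_\epsilon$ reads, at output frequency $\xi$,
\[
\frac{e^{-itp_\epsilon(\xi)}}{\langle1+\epsilon\xi\rangle}\int_0^te^{it_1\Phi_c(\xi)}\widehat{B_c}\bigl(t_1,\xi-\tfrac c\epsilon\bigr)\,dt_1,\qquad \Phi_c(\xi):=p_\epsilon(\xi)-\tfrac{c}{\epsilon^2\sqrt2}.
\]
Using the formula \eqref{varphi} for $p_\epsilon$ together with the fact that $\langle\cdot\rangle$ is $1$-Lipschitz while $\sqrt2>1$, one checks, writing $\mu:=\xi-\tfrac c\epsilon$ for the frequency of $B_c$, the uniform gap $\bigl|\Phi_c(\mu+\tfrac c\epsilon)-p_\epsilon(\mu)\bigr|\gtrsim\epsilon^{-2}$: for all $\mu$ when $c\in\{-2,-4\}$, and for $|\mu|\lesssim\epsilon^{-1}$ when $c=2$ (in the latter case the left side vanishes only on two thin zones near $\mu=\pm 4/\epsilon$, i.e.\ at $B_c$-frequencies of order $\epsilon^{-1}$). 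Equivalently, multiplication by $e^{\frac{ic}{\epsilon}(x-\frac{t}{\epsilon\sqrt2})}$ raises the modulation $\langle\tau+p_\epsilon(\xi)\rangle$ of $B_c$ by $\gtrsim\epsilon^{-2}$ away from those zones; note $\langle1+\epsilon D\rangle^{-1}$ provides no help here since these terms live at frequency $\sim\epsilon^{-1}$, so all the smallness must come from this gap.

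Accordingly I split $B_c$ into three pieces. \emph{(a)} The piece with a factor of frequency $\gtrsim\epsilon^{-1}$ (this contains the two resonant zones of the $c=2$ harmonic): estimate by the trilinear estimate with the rough factor placed in $C_tL^2_x$; under \textbf{(H1)}--\textbf{(H2)} that factor's $\psi_\epsilon$-part is $o_\epsilon(1)$ in $X_\epsilon^{1,b}$ by Proposition \ref{core high frequency} and its $r_\epsilon$-part is $\le\|r_\epsilon\|_{X_\epsilon^{1,b}}$ (absorbed), while under \textbf{(H2$'$)} the $\psi_\epsilon$-part is $\lesssim\epsilon^{s}$ by \eqref{high Sobolev norm core bound} (and \eqref{NLS property 2}). \emph{(b)} The piece on which $B_c$ has a factor of modulation $\langle\tau+p_\epsilon(\xi)\rangle\gtrsim\epsilon^{-2}$: that factor is the high-modulation part of $\psi_\epsilon+r_\epsilon$; for $\psi_\epsilon$ one uses the $\psi_\epsilon$-equation in \eqref{system eq} and the nonlinear estimate \eqref{high Sobolev norm core nonlinear bound} to see that the high-modulation part of a Duhamel term gains $\langle\epsilon^{-2}\rangle^{-(1-b)}$, so this factor is $\lesssim\epsilon^{2(1-b)}R^3$ in $X_\epsilon^{1,b}$, and the trilinear estimate then gives a contribution $\lesssim\epsilon^{2(1-b)}R^5+\|r_\epsilon\|_{X_\epsilon^{1,b}}R^2$. \emph{(c)} The remaining piece, on which $B_c$ has frequency $\lesssim\epsilon^{-1}$ (away from $\pm4/\epsilon$ when $c=2$) and modulation $\lesssim\epsilon^{-2}$: by the gap above, $e^{\frac{ic}{\epsilon}(x-\frac{t}{\epsilon\sqrt2})}B_c$ then has modulation $\gtrsim\epsilon^{-2}$, so the inhomogeneous $X^{0,b}$ estimate $\|\mathcal D_\epsilon\|_{X^{0,b}}\lesssim\|\cdot\|_{X^{0,-(1-b)}}$ yields a gain $\epsilon^{2(1-b)}$ against the trivial bound $\|B_c\|_{L^2_{t,x}}\lesssim T^{\frac{1-b}{2b}}R^3$ (a thin leftover, where $B_c$ has modulation of order $\epsilon^{-2}$ but no factor of modulation or frequency that large, is cleared by the same equation-based high-modulation gain for $\psi_\epsilon$, or by a bilinear-Strichartz improvement exploiting the frequency spread in $B_c$). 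Summing (a)--(c): under \textbf{(H1)}--\textbf{(H2)}, $\|r_\epsilon\|_{X_\epsilon^{1,b}}\le o_\epsilon(1)+C\epsilon^{2(1-b)}+\tfrac12\|r_\epsilon\|_{X_\epsilon^{1,b}}$, giving \eqref{remainder estimate; eq1} via $X_\epsilon^{1,b}\hookrightarrow C_tH_\epsilon^1$; under \textbf{(H2$'$)} the same bookkeeping gives $\|r_\epsilon\|_{X_\epsilon^{1,b}}\lesssim\epsilon^{s}+\epsilon^{2(1-b)}$, and taking $b=\tfrac12+\tfrac\eta2$ produces \eqref{remainder estimate; eq2}.

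\emph{Main obstacle.} The hard part is step (c) together with the resonant zones of step (a): because $\langle1+\epsilon D\rangle^{-1}$ is harmless at the frequencies $\sim\epsilon^{-1}$ of $\mathcal R_\epsilon(A_\epsilon)$, the decay is extracted purely from time-oscillation (a normal-form / modulation argument), which degenerates precisely on the two zones $\mu=\pm4/\epsilon$ of the $c=2$ harmonic; clearing those zones forces the use of the high-frequency estimate for the core profile (Proposition \ref{core high frequency}, and \eqref{high Sobolev norm core bound} for the rate), and the whole scheme must be run as a bootstrap coupled to the a priori control of $r_\epsilon$, with the modulation cut-offs and Bourgain exponents tuned so that the two rates $\epsilon^{s}$ and $\epsilon^{1-\eta}$ come out simultaneously.
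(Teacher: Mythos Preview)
Your overall strategy---bootstrap $r_\epsilon$ in $X_\epsilon^{1,b}$, absorb every cubic monomial carrying an $r_\epsilon$, and reduce to a non-resonance estimate for $\mathcal R_\epsilon$---is exactly what the paper does (its Lemma~\ref{remainder estimate reduction lemma}). The difference is in how that non-resonance estimate is executed. The paper first replaces $\mathcal R_\epsilon(A_\epsilon)$ by $\mathcal R_\epsilon(P_{\le\frac{1}{100\epsilon}}\psi_\epsilon)$ (the discrepancy carries a factor of $r_\epsilon$ or of $P_{>\frac{1}{100\epsilon}}\psi_\epsilon$ and is handled by the trilinear estimate plus Proposition~\ref{core high frequency}/\eqref{high Sobolev norm core bound}), and then proves the single trilinear bound $\|I\!I_j\|_{X^{0,-(\frac12-\eta)}}\lesssim\epsilon^{1-2\eta}\|\psi_\epsilon\|_{X^{0,1-\eta}}^3$ by Plancherel and Cauchy--Schwarz, reducing to the explicit scalar integral of Lemma~\ref{key integral inequality}. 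That lemma encodes the full resonance function in one stroke; no slicing of $B_c$ by its own modulation is ever needed.

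Your route via the (b)/(c) split, by contrast, does not close as written. The claim in (b) that ``$B_c$ has modulation $\gtrsim\epsilon^{-2}$ implies one factor does'' is generally false: the inner resonance function $\sum_j\pm p_\epsilon(\pm\xi_j)-p_\epsilon(\mu)$ can itself be of order $\epsilon^{-2}$ when factor frequencies are merely $O(\epsilon^{-1})$. It becomes true only after you cut in (a) at a \emph{small} multiple of $\epsilon^{-1}$ (so that each $|p_\epsilon(\xi_j)|\ll\epsilon^{-2}$), which you do not make explicit; your ``thin leftover'' fix is too vague to fill this gap. Separately, the alleged resonant zones at $\mu=\pm 4/\epsilon$ for $c=2$ do not exist: since $|\langle 3+\epsilon\mu\rangle-\langle 1+\epsilon\mu\rangle|<2<2\sqrt2$ for all $\mu$, the phase shift $p_\epsilon(\mu+\tfrac c\epsilon)-p_\epsilon(\mu)-\tfrac{c}{\epsilon^2\sqrt2}$ is uniformly $\gtrsim\epsilon^{-2}$ for each $c\in\{2,-2,-4\}$. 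If you take the frequency cutoff in (a) to be $\tfrac{1}{100\epsilon}$ and drop the modulation split (b)/(c) in favor of a direct weighted-$L^2$ convolution estimate on the low-frequency piece, your argument collapses into the paper's proof.
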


The proof of the proposition is reduced to that of the following lemma.

\begin{lemma}[Reduction to the core profile estimate]\label{remainder estimate reduction lemma}
Let $\epsilon\in(0,1]$, and let $\eta>0$ be a sufficiently small number which does not depend on $\epsilon\in(0,1]$. We assume that $\{\psi_{\epsilon,0}\}_{\epsilon\in(0,1]}$ satisfies \textup{\textbf{(H1)}}. Then, the solution $(\psi_\epsilon, r_\epsilon)\in X_\epsilon^{1,1-\eta}([-T,T])\times X_\epsilon^{1,1-\eta}([-T,T])$ to the system \eqref{system eq} satisfies
$$\|r_\epsilon\|_{X_\epsilon^{1,\frac{1}{2}+\eta}([-T,T])}\lesssim \epsilon^{1-2\eta}+\|P_{>\frac{1}{100\epsilon}}\psi_\epsilon\|_{X^{0,\frac{1}{2}+\eta}([-T,T])}.$$
\end{lemma}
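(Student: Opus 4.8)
\textbf{Proof plan for Lemma \ref{remainder estimate reduction lemma}.}

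The plan is to estimate the $r_\epsilon$-equation in the system \eqref{system eq} directly using the inhomogeneous $X^{0,b}$-estimate together with the trilinear estimate (Lemma \ref{basic trilinear estimate}), decomposing the source term into the oscillatory piece $\mathcal{R}_\epsilon(A_\epsilon)$ and the difference $|A_\epsilon|^2A_\epsilon-|\psi_\epsilon|^2\psi_\epsilon$. First I would observe that by Lemma \ref{basic trilinear estimate} and the preliminary bound \eqref{core bound} for $r_\epsilon=A_\epsilon-\psi_\epsilon$, the cubic difference $|A_\epsilon|^2A_\epsilon-|\psi_\epsilon|^2\psi_\epsilon$ is a sum of trilinear terms each containing at least one factor of $r_\epsilon$; hence, after choosing $T$ small depending on $R$, this contribution is absorbed into the left-hand side as $\tfrac12\|r_\epsilon\|_{X_\epsilon^{1,\frac12+\eta}}$ via a standard bootstrap. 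So the heart of the matter is to bound the oscillatory contribution coming from $\mathcal{R}_\epsilon(A_\epsilon)$, whose three terms carry the fast phases $e^{\pm\frac{2i}{\epsilon}(x-\frac{t}{\epsilon\sqrt2})}$ and $e^{-\frac{4i}{\epsilon}(x-\frac{t}{\epsilon\sqrt2})}$.

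The key point for the oscillatory terms is the presence of the smoothing factor $\frac{1}{\langle 1+\epsilon D\rangle}$ in front of the Duhamel integral in \eqref{system eq}. Writing $\mathcal{R}_\epsilon(A_\epsilon)$ as a modulated product, say $e^{\frac{2i}{\epsilon}(x-\frac{t}{\epsilon\sqrt2})}A_\epsilon^3$, the spatial frequency of this product is shifted by $+\frac{2}{\epsilon}$, so after applying $\frac{1}{\langle1+\epsilon D\rangle}$ the output frequencies $\xi$ satisfy $\langle 1+\epsilon\xi\rangle\gtrsim \epsilon|\xi|\gtrsim 1$ on the bulk of the product (where the three input frequencies are $O(1/\epsilon)$), which produces a gain of a factor $\epsilon$ — this is the source of the $\epsilon^{1-2\eta}$ in the claimed bound, the loss $\epsilon^{-2\eta}$ being the usual $T^{\frac{1-b}{2b}}$-type slack from $b=\frac12+\eta$. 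To make this rigorous I would split each input $A_\epsilon=\psi_\epsilon+r_\epsilon$ further, and split $\psi_\epsilon=P_{\leq\frac{1}{100\epsilon}}\psi_\epsilon+P_{>\frac{1}{100\epsilon}}\psi_\epsilon$: when \emph{all} three factors are low-frequency pieces $P_{\leq\frac{1}{100\epsilon}}\psi_\epsilon$, the product is supported in $|\xi|\lesssim \frac{1}{100\epsilon}$, so the modulated product $e^{\frac{2i}{\epsilon}(\cdots)}(\cdot)^3$ is supported in $|\xi+\frac2\epsilon|\lesssim\frac{1}{100\epsilon}$, i.e. $|\xi|\sim\frac1\epsilon$, whence $\frac{1}{\langle1+\epsilon D\rangle}\sim\epsilon$ on its support and we gain the full $\epsilon$; when at least one factor is a high-frequency piece $P_{>\frac{1}{100\epsilon}}\psi_\epsilon$ or the remainder $r_\epsilon$ (whose $X_\epsilon^{1,b}$-norm is already $O(T^{\frac{1-b}{2b}}R^3)$ and can be made small), that term is controlled by the second term $\|P_{>\frac{1}{100\epsilon}}\psi_\epsilon\|_{X^{0,\frac12+\eta}}$ on the right-hand side (the $r_\epsilon$-contribution again being reabsorbed). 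Throughout, the modulation $e^{-\frac{it}{\epsilon^2\sqrt2}\cdot\,2}$ in the phase only shifts the temporal frequency variable $\tau$ and thus the weight $\langle\tau+p_\epsilon(\xi)\rangle$ by an amount independent of the input functions, so it does not affect the $X^{s,b}$-estimates after a translation in $\tau$ — I would note this carefully but it is routine.

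I expect the main obstacle to be bookkeeping the frequency-support interaction cleanly, since the $X_\epsilon^{1,b}$-norm weights frequencies by $\langle\epsilon\xi\rangle$ while the $X^{0,b}$-norm on the right-hand side does not, and the trilinear estimate as stated moves from $X_\epsilon^{1,b}\times X_\epsilon^{1,b}\times X^{0,b}$ into $X^{0,-(1-b)}$; I would therefore need a slightly modified version — essentially the same Hölder-plus-Strichartz computation as in the proof of Lemma \ref{basic trilinear estimate} — that tracks the extra $\frac{1}{\langle1+\epsilon D\rangle}$ factor and converts the unbalanced weights into the stated $\epsilon$-gain. The oscillatory terms with phases $e^{-\frac{2i}{\epsilon}(\cdots)}$ and $e^{-\frac{4i}{\epsilon}(\cdots)}$ are handled identically (the frequency shift is $-\frac2\epsilon$ or $-\frac4\epsilon$, still giving $|\xi|\gtrsim\frac1\epsilon$ on the low-frequency core, hence the same $\epsilon$-gain), so no new ideas are needed there. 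Finally, once the $X_\epsilon^{1,\frac12+\eta}$-bound is established, one passes to $C_t H_\epsilon^1$ by the embedding $X_\epsilon^{1,\frac12+\eta}([-T,T])\hookrightarrow C_t([-T,T];H_\epsilon^1(\mathbb{R}))$, which is how the lemma will be used to prove Proposition \ref{remainder estimate}.
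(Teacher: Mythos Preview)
Your reduction step (splitting off the terms containing $r_\epsilon$ or $P_{>\frac{1}{100\epsilon}}\psi_\epsilon$ and absorbing them via Lemma~\ref{basic trilinear estimate}) is fine and matches the paper. The genuine gap is in your treatment of the oscillatory core $\mathcal{R}_\epsilon(P_{\leq\frac{1}{100\epsilon}}\psi_\epsilon)$: the mechanism you propose for the $\epsilon$-gain does not exist. When the three inputs are low-frequency pieces, the modulated product $e^{\frac{2i}{\epsilon}(x-\cdots)}(P_{low}\psi_\epsilon)^3$ is supported near $\xi\approx\frac{2}{\epsilon}$ (and similarly $\xi\approx-\frac{2}{\epsilon},-\frac{4}{\epsilon}$ for the other two terms), so $1+\epsilon\xi$ is $\approx 3,-1,-3$ respectively and $\frac{1}{\langle 1+\epsilon D\rangle}$ is $O(1)$, \emph{not} $O(\epsilon)$. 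There is no spatial smoothing gain here.

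The actual source of the $\epsilon^{1-2\eta}$ is exactly the thing you dismissed as ``routine'': the temporal modulation $e^{\mp\frac{i\sqrt{2}t}{\epsilon^2}\cdot 2}$ (and its analogue for the $-\frac{4}{\epsilon}$ term) shifts $\tau$ so that the output sits at distance $\sim\frac{1}{\epsilon^2}$ from the characteristic surface $\tau+p_\epsilon(\xi)=0$. Concretely, for the $e^{\frac{2i}{\epsilon}(x-\frac{t}{\epsilon\sqrt{2}})}$ term the output weight is $\langle\tau-\frac{\sqrt{2}}{\epsilon^2}+p_\epsilon(\xi+\frac{2}{\epsilon})\rangle^{-(\frac12-\eta)}$ while the inputs live near $\tau_j+p_\epsilon(\xi_j)=0$; these constraints are incompatible by an amount $\sim\frac{1}{\epsilon^2}$, which is the non-resonance that produces the gain in the $X^{0,-(\frac12-\eta)}$ norm. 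This is \emph{not} invariant under translation in $\tau$ --- your claim that the shift ``does not affect the $X^{s,b}$-estimates'' is precisely backwards. The paper quantifies this via a dedicated integral estimate (Lemma~\ref{key integral inequality}), reducing each oscillatory piece to a supremum over $(\tau,\xi)$ of a product of two brackets that cannot simultaneously be small; a straightforward H\"older/Strichartz argument as in Lemma~\ref{basic trilinear estimate} does not see this structure and will only give $O(1)$.
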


\begin{proof}[Proof of Proposition \ref{remainder estimate}, assuming Lemma \ref{remainder estimate reduction lemma}]
By Proposition \ref{core high frequency}, Lemma \ref{remainder estimate reduction lemma} yields \eqref{remainder estimate; eq1}. For \eqref{remainder estimate; eq2}, we recall from Proposition \ref{LWP for system} that $\|\psi_\epsilon\|_{X^{s,\frac{1}{2}+\eta}}\lesssim 1$ if \textbf{\textup{(H2')}} holds. Thus, it follows that $\|P_{>\frac{1}{100\epsilon}}\psi_\epsilon\|_{X^{0,\frac{1}{2}+\eta}}\lesssim \epsilon^s$.
\end{proof}

For the proof of the lemma, we employ the following integral estimate. 

\begin{lemma}\label{key integral inequality}
For $\tau,\xi\in\mathbb{R}$, we define
$$\mathcal{I}^\pm(\tau,\xi):=\int_{-\infty}^\infty\int_{-\infty}^\infty \frac{\mathbf{1}_{\{|\xi_1|, |\xi_2|, |\xi-\xi_1-\xi_2|\leq\frac{1}{50\epsilon}\}}d\xi_1d\xi_2}{\langle \tau+\frac{1}{\epsilon^2}(\pm\langle 1+\epsilon\xi_1\rangle+\langle 1+\epsilon\xi_2\rangle+\langle 1+\epsilon(\xi-\xi_1-\xi_2)\rangle)\rangle^{2(1-\eta)}}.$$
Then, for $\epsilon\in(0,1]$ and sufficiently small $\eta>0$, we have 
$$\mathcal{I}^\pm(\tau,\xi)\lesssim \max\left\{\frac{1}{\langle \tau+\frac{2\sqrt{2}\pm\sqrt{2}+\frac{1}{10}}{\epsilon^2}\rangle^{1-2\eta}}, \frac{1}{\langle \tau+\frac{2\sqrt{2}\pm\sqrt{2}- \frac{1}{10}}{\epsilon^2}\rangle^{1-2\eta}}\right\}.$$
\end{lemma}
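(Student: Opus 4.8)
The plan is to reduce the double integral to a one-dimensional integral by a change of variables adapted to the phase, and then to invoke the elementary ``resolvent'' estimate $\int_{\mathbb{R}} \langle \tau + a(\zeta)\rangle^{-2(1-\eta)}\, d\zeta \lesssim \langle \tau + a_*\rangle^{-(1-2\eta)}$ whenever $a$ is a smooth function whose derivative does not vanish on the relevant region and $a_*$ is a value attained (or approached) by $a$ on the endpoints of the integration domain. Concretely, I first freeze $\xi_2$ and $\xi$ and regard the inner integral as a function of $\xi_1$. The phase
$$\Psi_\epsilon^\pm(\xi_1) := \frac{1}{\epsilon^2}\Big(\pm\langle 1+\epsilon\xi_1\rangle + \langle 1+\epsilon\xi_2\rangle + \langle 1+\epsilon(\xi-\xi_1-\xi_2)\rangle\Big)$$
has $\xi_1$-derivative $\epsilon^{-1}\big(\pm\frac{1+\epsilon\xi_1}{\langle 1+\epsilon\xi_1\rangle} - \frac{1+\epsilon(\xi-\xi_1-\xi_2)}{\langle 1+\epsilon(\xi-\xi_1-\xi_2)\rangle}\big)$; on the cut-off region $|\xi_1|,|\xi_2|,|\xi-\xi_1-\xi_2|\le \frac{1}{50\epsilon}$ the arguments $1+\epsilon\xi_1$ and $1+\epsilon(\xi-\xi_1-\xi_2)$ lie in a fixed interval around $1$ (roughly $[\frac{49}{50},\frac{51}{50}]$ up to the other shifts), so the two unit-vector components $\frac{1+\epsilon\xi_1}{\langle 1+\epsilon\xi_1\rangle}$ etc.\ are each bounded away from $\pm1$. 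In the $+$ case the two terms add, giving $|\partial_{\xi_1}\Psi_\epsilon^+| \gtrsim 1$; in the $-$ case one must check that the difference does not vanish, which holds because $x\mapsto x/\langle x\rangle$ is strictly monotone and the two arguments differ by $\epsilon\xi - 2\epsilon\xi_1 - \epsilon\xi_2$, and one splits the region into the part where this difference is $\gtrsim \epsilon$ (so $|\partial_{\xi_1}\Psi_\epsilon^-|\gtrsim 1$ by the mean value theorem on $x/\langle x\rangle$) and the small part where it is $\lesssim\epsilon$ — but on that small part the integrand has $|\xi_1|$-measure only $O(1)$ and $\Psi^-$ itself lies within $O(1)$ of a fixed constant, which is harmless after summing.

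With $|\partial_{\xi_1}\Psi_\epsilon^\pm|\gtrsim 1$ on the bulk, I substitute $\sigma = \Psi_\epsilon^\pm(\xi_1)$ as the new variable; the Jacobian is $O(1)$, the $\xi_1$-integral becomes $\int_{J} \langle \tau+\sigma\rangle^{-2(1-\eta)}\,d\sigma \lesssim \langle \tau + \sigma_-\rangle^{-(1-2\eta)}$, where $\sigma_-$ is the endpoint of the $\sigma$-range closest to $-\tau$, i.e.\ the infimum of $\Psi_\epsilon^\pm$ over the admissible $\xi_1$. Then I perform the $\xi_2$-integration, which is over an interval of length $O(\epsilon^{-1})$, but the bound produced by the first step is uniform in $\xi_2$ up to the location of the endpoint; tracking how the endpoint $\inf_{\xi_1}\Psi_\epsilon^\pm$ depends on $\xi_2$ and then optimizing (again with the same monotonicity of $x/\langle x\rangle$) shows that the extremal configurations are the corners of the cube $\{|\xi_1|,|\xi_2|,|\xi-\xi_1-\xi_2|\le \frac1{50\epsilon}\}$. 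At such a corner two of the three arguments equal $1\pm\frac{1}{50}$ and the third is forced, so $\Psi_\epsilon^\pm$ evaluates (to leading order, with the $\langle\cdot\rangle$ evaluated at $1$, i.e.\ $\sqrt2$, plus $O(1)$ corrections of size $\frac1{10\epsilon^2}$) to $\frac{1}{\epsilon^2}(2\sqrt2 \pm \sqrt2 \pm \frac{1}{10})$, which is exactly the pair of centers appearing on the right-hand side. Taking the larger of the two resulting reciprocals absorbs the sign ambiguity and the $O(1)$ slack in the exponent via $\langle\cdot\rangle^{-(1-2\eta)}$.

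The main obstacle I anticipate is the non-stationarity check for the $-$ sign together with the bookkeeping of the endpoint value under the $\xi_2$-integration: one has to show both that $\partial_{\xi_1}\Psi_\epsilon^-$ is bounded below away from a measure-zero-in-effect set, and that after integrating out $\xi_2$ the worst endpoint is genuinely the corner value $\frac{1}{\epsilon^2}(2\sqrt2-\sqrt2\pm\frac1{10})$ rather than something with a worse $\epsilon$-power or a different constant; this requires being a little careful that the $\sqrt{2}$'s come out correctly from $\langle 1\rangle=\sqrt2$ and that the $\frac{1}{10} = \frac{50^2 - 49^2}{\cdots}$-type numerology matches the $\frac{1}{50\epsilon}$ truncation (the $\frac{1}{10}$ arises as twice the first-order change $\epsilon\xi_j \cdot \frac{1+\epsilon\xi_j}{\langle 1+\epsilon\xi_j\rangle}\big|_{\xi_j = \pm\frac1{50\epsilon}} \approx \pm\frac{1}{50}\cdot\frac{1}{\sqrt2}$, summed over the two free variables and cleared of the $\sqrt2$, modulo lower order). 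Everything else — the $O(1)$ Jacobian, the one-dimensional resolvent integral, and the loss of $\eta$ in the exponent — is routine. If the corner-extremality of the $\xi_2$-integral turns out delicate, a robust fallback is to dyadically decompose in the distance $|\tau + \Psi_\epsilon^\pm|$ and count, for each dyadic scale $\mu$, the measure of $(\xi_1,\xi_2)$ in the cut-off cube with $|\tau + \Psi_\epsilon^\pm(\xi_1,\xi_2)|\sim\mu$; transversality of the gradient of $(\xi_1,\xi_2)\mapsto \Psi_\epsilon^\pm$ in one of the two directions bounds this measure by $O(\mu \cdot \epsilon^{-1})$ off the corners and $O(\mu)$ near them, and summing the geometric series reproduces the claimed bound.
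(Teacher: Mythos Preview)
Your main line of attack has a sign error that breaks the argument for $\mathcal{I}^+$. With $\Psi_\epsilon^\pm$ as you defined it,
\[
\partial_{\xi_1}\Psi_\epsilon^\pm=\frac{1}{\epsilon}\Big(\pm\tfrac{1+\epsilon\xi_1}{\langle 1+\epsilon\xi_1\rangle}-\tfrac{1+\epsilon(\xi-\xi_1-\xi_2)}{\langle 1+\epsilon(\xi-\xi_1-\xi_2)\rangle}\Big),
\]
and on the cut-off region both fractions lie near $\tfrac{1}{\sqrt2}$. Hence in the $+$ case the two terms \emph{subtract}, not add: $\partial_{\xi_1}\Psi_\epsilon^+$ vanishes at $\xi_1=\tfrac{\xi-\xi_2}{2}$, which for generic $\xi,\xi_2$ lies inside the integration domain. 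So there is no lower bound $|\partial_{\xi_1}\Psi_\epsilon^+|\gtrsim 1$, the Jacobian of your substitution degenerates, and the ``$O(1)$ Jacobian $\Rightarrow$ one-dimensional resolvent integral'' step collapses precisely in the case you label as easy. (Conversely, in the $-$ case the two terms genuinely add, giving $|\partial_{\xi_1}\Psi_\epsilon^-|\sim\epsilon^{-1}$ uniformly, so the splitting you propose there is unnecessary.)

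Your fallback also does not close as written. Bounding the measure of $\{|\tau+\Psi_\epsilon^+|\sim\mu\}$ by $O(\mu\epsilon^{-1})$ using only transversality of \emph{one} partial derivative loses a full factor $\epsilon^{-1}$ after summing the dyadic series, so you miss the target $\langle\cdot\rangle^{-(1-2\eta)}$ by that factor. What is actually needed is a co-area bound of order $O(1)$, and for that you must use the Hessian, not the gradient: $\Psi_\epsilon^+$ is strictly convex in $(\xi_1,\xi_2)$ with Hessian eigenvalues $\sim 1$, which forces the level-set density to be $O(1)$. The paper exploits exactly this convexity, but organized differently: it integrates first in $\xi_2$ (where both $\Psi^\pm$ have an interior minimum), uses $|dy_2/d\xi_2|\sim\sqrt{y_2-y_2^{\min}}$ to get an integrable $1/\sqrt{\,\cdot\,}$ singularity, and then repeats the convexity argument in $\xi_1$ for $\mathcal{I}^+$ (producing a second $1/\sqrt{\,\cdot\,}$ and a Beta-type integral $\int_\alpha^\beta\frac{dy}{\sqrt{\beta-y}\sqrt{y-\alpha}}\sim 1$), while for $\mathcal{I}^-$ it uses the large derivative $|dy_1/d\xi_1|\sim\epsilon^{-1}$. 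Without a mechanism of this kind to handle the interior critical point, your plan does not go through.
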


\begin{proof}
In a sequel, we always assume that $|\xi_1|, |\xi_2|, |\xi-\xi_1-\xi_2|\leq\frac{1}{50\epsilon}$. First, in the integral $\mathcal{I}^\pm(\tau,\xi)$, we will substitute the function in the bracket by 
$$y_2=y_2(\xi_2):=\tau+\frac{1}{\epsilon^2}\Big(\pm\langle 1+\epsilon\xi_1\rangle+\langle 1+\epsilon\xi_2\rangle+\langle 1+\epsilon(\xi-\xi_1-\xi_2)\rangle\Big).$$
Indeed, $y_2(\xi_2)$ is a convex function having its unique minimum at $\xi_2=\frac{\xi-\xi_1}{2}$, because $y_2'(\frac{\xi-\xi_1}{2})=0$ and $y_2''(\xi_2)=\frac{1}{\langle 1+\epsilon\xi_2\rangle^3}+\frac{1}{\langle 1+\epsilon(\xi-\xi_1-\xi_2)\rangle^3}\sim 1$. As a consequence, by the mean value theorem, we have $|\frac{dy_2}{d\xi_2}|=|y_2'(\xi_2)-y_2'(\frac{\xi-\xi_1}{2})|\sim|\xi_2-\frac{\xi-\xi_1}{2}|$, while by Taylor's theorem, $y_2(\xi_2)-y_2(\frac{\xi-\xi_1}{2})=\frac{y_2''(c)}{2}(\xi_2-\frac{\xi-\xi_1}{2})^2\sim (\xi_2-\tfrac{\xi-\xi_1}{2})^2$. Hence, it follows that
$$\left|\frac{dy_2}{d\xi_2}\right|\sim\sqrt{y_2(\xi_2)-y_2(\tfrac{\xi-\xi_1}{2})}.$$
Moreover, we have
$$\Big|y_2-\tau-\frac{1}{\epsilon^2}(2\sqrt{2}\pm\sqrt{2})\Big|\leq\frac{1}{10\epsilon^2} .$$
Therefore, substituting $\xi_2$ by $y_2$ separately on $(-\infty,\frac{\xi-\xi_1}{2}]$ and $[\frac{\xi-\xi_1}{2},\infty)$, and switching the order of integration, we obtain 
$$\mathcal{I}^\pm(\tau,\xi)\lesssim \int_{\tau+\frac{2\sqrt{2}\pm\sqrt{2}-\frac{1}{10}}{\epsilon^2}}^{\tau+\frac{2\sqrt{2}\pm\sqrt{2}+\frac{1}{10}}{\epsilon^2}} \left\{\int_{|\xi_1|\leq\frac{1}{50\epsilon}}\frac{\mathbf{1}_{\{y_2\geq y_2(\frac{\xi-\xi_1}{2})\}}}{\langle y_2\rangle^{2(1-\eta)}\sqrt{y_2-y_2(\frac{\xi-\xi_1}{2})}}d\xi_1\right\}dy_2.$$

Next, in the inner integral $\{\cdots\}$, we will substitute by 
\begin{equation}\label{y_1}
y_1=y_1(\xi_1):=y_2(\tfrac{\xi-\xi_1}{2})=\tau+\frac{1}{\epsilon^2}\Big(\pm\langle 1+\epsilon\xi_1\rangle+2\langle 1+\epsilon(\tfrac{\xi-\xi_1}{2})\rangle\Big),
\end{equation}
whose first and second derivatives are given by
\begin{align}
y_1'(\xi_1)&=\frac{1}{\epsilon}\left(\pm\frac{1+\epsilon\xi_1}{\langle 1+\epsilon\xi_1\rangle}-\frac{1+\epsilon(\tfrac{\xi-\xi_1}{2})}{\langle 1+\epsilon(\tfrac{\xi-\xi_1}{2})\rangle}\right),\label{y_1 derivatives}\\
y_1''(\xi_1)&=\pm\frac{1}{\langle 1+\epsilon\xi_1\rangle^3}+\frac{1}{2\langle 1+\epsilon(\tfrac{\xi-\xi_1}{2})\rangle^3}.\nonumber
\end{align}
For $\mathcal{I}^+(\tau,\xi)$, we note as before that $y_1(\xi_1)$ is convex, $y_1'(\frac{\xi}{3})=0$, $|\frac{dy_1}{d\xi_1}|\sim |\xi_1-\frac{\xi}{3}|$ and $y_1(\xi_1)-y_1(\tfrac{\xi}{3})\sim (\xi_1-\tfrac{\xi}{3})^2$. Thus, the substitution $y_1=y_2(\frac{\xi-\xi_1}{2})$ yields
$$\mathcal{I}^+(\tau,\xi)\lesssim \int_{\tau+\frac{3\sqrt{2}-\frac{1}{10}}{\epsilon^2}}^{\tau+\frac{3\sqrt{2}+\frac{1}{10}}{\epsilon^2}}\left\{\int_{y_1(\frac{\xi}{3})}^{y_2}\frac{dy_1}{\langle y_2\rangle^{2(1-\eta)}\sqrt{y_2-y_1}\sqrt{y_1-y_1(\tfrac{\xi}{3})}}\right\}dy_2.$$
Then, applying the elementary inequality
$$\int_\alpha^\beta\frac{dy_1}{\sqrt{\beta-y_1}\sqrt{y_1-\alpha}}=\int_0^{\beta-\alpha}\frac{dy_1}{\sqrt{(\beta-\alpha)-y_1}\sqrt{y_1}}=\int_0^1\frac{dy_1}{\sqrt{1-y_1}\sqrt{y_1}}\sim 1\quad (\alpha<\beta),$$
we conclude that 
$$\mathcal{I}^+(\tau,\xi)\lesssim  \int_{\tau+\frac{3\sqrt{2}-\frac{1}{10}}{\epsilon^2}}^{\tau+\frac{3\sqrt{2}+\frac{1}{10}}{\epsilon^2}}\frac{dy_2}{\langle y_2\rangle^{2(1-\eta)}}\sim\max_\pm \left\{\frac{1}{\langle \tau+\frac{3\sqrt{2}\pm \frac{1}{10}}{\epsilon^2}\rangle^{1-2\eta}}\right\}.$$
For $\mathcal{I}^-(\tau,\xi)$, we again substitute by $y_1(\xi_1)=y_2(\frac{\xi-\xi_1}{2})$, but we instead use that $|y_1(\xi_1)-\tau-\frac{\sqrt{2}}{\epsilon^2}|\leq\frac{1}{10\epsilon^2}$ (see \eqref{y_1}) and $-\frac{dy_1}{d\xi_1}\sim\frac{1}{\epsilon}$ (see \eqref{y_1 derivatives}). Then, it follows that 
$$\begin{aligned}
\mathcal{I}^-(\tau,\xi)&\lesssim \int_{\tau+\frac{\sqrt{2}-\frac{1}{10}}{\epsilon^2}}^{\tau+\frac{\sqrt{2}+\frac{1}{10}}{\epsilon^2}} \left\{\int_{\tau+\frac{\sqrt{2}-\frac{1}{10}}{\epsilon^2}}^{\tau+\frac{\sqrt{2}+\frac{1}{10}}{\epsilon^2}}\frac{\epsilon \mathbf{1}_{\{y_2\geq y_1\}}}{\langle y_2\rangle^{2(1-\eta)}\sqrt{y_2-y_1}}dy_1\right\}dy_2\\
&\lesssim  \int_{\tau+\frac{\sqrt{2}-\frac{1}{10}}{\epsilon^2}}^{\tau+\frac{\sqrt{2}+\frac{1}{10}}{\epsilon^2}} \frac{dy_2}{\langle y_2\rangle^{2(1-\eta)}}\sim\max_\pm \left\{\frac{1}{\langle \tau+\frac{\sqrt{2}\pm \frac{1}{10}}{\epsilon^2}\rangle^{1-2\eta}}\right\}.
\end{aligned}$$
\end{proof}

\begin{proof}[Proof of Lemma \ref{remainder estimate reduction lemma}]
For convenience, we denote $P_{low}=P_{\leq\frac{1}{100\epsilon}}$ and $P_{high}=P_{>\frac{1}{100\epsilon}}$. For the $r_\epsilon$-equation in the system \eqref{system eq}, after taking out the low frequency piece $\mathcal{R}_\epsilon(P_{low}\psi_\epsilon)$ from $\mathcal{R}_\epsilon(\psi_\epsilon)$, we apply Strichartz estimates. Then, it follows that 
$$\begin{aligned}
\|r_\epsilon\|_{X_\epsilon^{1,\frac{1}{2}+\eta}}&\lesssim\big\||A_\epsilon|^2A_\epsilon-|\psi_\epsilon|^2\psi_\epsilon+\mathcal{R}_\epsilon(A_\epsilon)-\mathcal{R}_\epsilon(P_{low}\psi_\epsilon)\big\|_{X_\epsilon^{0,-(\frac{1}{2}-\eta)}}+\|\mathcal{R}_\epsilon(P_{low}\psi_\epsilon)\|_{X_\epsilon^{0,-(\frac{1}{2}-\eta)}}\\
&=:I+I\!I.
\end{aligned}$$
For $I$, we apply the trilinear estimate (Lemma \ref{basic trilinear estimate}) with the bounds $\|\psi_\epsilon\|_{X_\epsilon^{1,1-\eta}}, \|r_\epsilon\|_{X_\epsilon^{1,1-\eta}}\lesssim1$ (see Proposition \ref{LWP for system}). Then, it follows that 
$$I\lesssim T^{\frac{1-2\eta}{2(1+2\eta)}}\big(\|r_\epsilon\|_{X^{0,\frac{1}{2}+\eta}}+ \|P_{high}\psi_\epsilon\|_{X^{0,\frac{1}{2}+\eta}}\big).$$
Hence, replacing $T>0$ by smaller one if necessary, we obtain
$$\|r_\epsilon\|_{X_\epsilon^{1,\frac{1}{2}+\eta}}\lesssim I\!I+\|P_{high}\psi_\epsilon\|_{X^{0,\frac{1}{2}+\eta}}.$$
It remains to estimate $I\!I$. Indeed, it is obvious that 
$$\begin{aligned}
I\!I&\lesssim \big\|e^{\frac{2i}{\epsilon}(x-\frac{t}{\epsilon\sqrt{2}})}(P_{low}\psi_\epsilon)^3\big\|_{X^{0,-(\frac{1}{2}-\eta)}}+\Big\|e^{\frac{2i}{\epsilon}(x-\frac{t}{\epsilon\sqrt{2}})}\overline{(e^{\frac{2i}{\epsilon}(x-\frac{t}{\epsilon\sqrt{2}})}P_{low} \psi_\epsilon)^2}(P_{low} \psi_\epsilon)\Big\|_{X^{0,-(\frac{1}{2}-\eta)}}\\
&\quad+\Big\|e^{\frac{2i}{\epsilon}(x-\frac{t}{\epsilon\sqrt{2}})}\overline{(e^{\frac{2i}{\epsilon}(x-\frac{t}{\epsilon\sqrt{2}})}P_{low} \psi_\epsilon)^3}\Big\|_{X^{0,-(\frac{1}{2}-\eta)}}\\
&=:I\!I_1+I\!I_2+I\!I_3.
\end{aligned}$$
Thus, it suffices to show that for $j=1,2,3$,
\begin{equation}\label{I_j estimates}
I\!I_j\lesssim \epsilon^{1-2\eta}\|\psi_\epsilon\|_{X^{0,1-\eta}}^3\lesssim_R \epsilon^{1-2\eta}.
\end{equation}
We consider $I\!I_1$ first. Note that by the Plancherel theorem and the Cauchy-Schwarz inequality, the proof of \eqref{I_j estimates} with $j=1$ is reduced to show a uniform bound for the integral\footnote{It is a typical reduction. We refer to \cite[Lemma 3.1]{Tao} for instance.}
$$\begin{aligned}
\frac{1}{\langle\tau-\frac{\sqrt{2}}{\epsilon^2}+p_\epsilon(\xi+\frac{2}{\epsilon})\rangle^{1-2\eta}}\iint_{\mathbb{R}\times\mathbb{R}}&\iint_{\mathbb{R}\times\mathbb{R}}\frac{\mathbf{1}_{\{|\xi_1|, |\xi_2|, |\xi-\xi_1-\xi_2|\leq\frac{1}{50\epsilon}\}}}{\langle \tau-\tau_1-\tau_2+p_\epsilon(\xi-\xi_1-\xi_2)\rangle^{2(1-\eta)}}\\
&\qquad\cdot \frac{1}{\langle \tau_1+p_\epsilon(\xi_1)\rangle^{2(1-\eta)}\langle \tau_2+p_\epsilon(\xi_2)\rangle^{2(1-\eta)}}d\tau_1d\tau_2d\xi_1d\xi_2
\end{aligned}$$
Indeed, eliminating $\tau_1,\tau_2$-integrations by the elementary inequality
$$\int_{\mathbb{R}}\frac{d\tau}{\langle\tau-\alpha\rangle^{2(1-\eta)}\langle\tau-\beta\rangle^{2(1-\eta)}}\lesssim\frac{1}{\langle\alpha-\beta\rangle^{2(1-\eta)}},$$
it is further reduced to show that 
$$\mathcal{I}_1:=\frac{1}{\langle\tau-\frac{\sqrt{2}}{\epsilon^2}+p_\epsilon(\xi+\frac{2}{\epsilon})\rangle^{1-2\eta}}\iint_{\mathbb{R}\times\mathbb{R}} \frac{\mathbf{1}_{\{|\xi_1|, |\xi_2|, |\xi-\xi_1-\xi_2|\leq\frac{1}{50\epsilon}\}}}{\langle \tau+p_\epsilon(\xi_1)+p_\epsilon(\xi_2)+p_\epsilon(\xi-\xi_1-\xi_2))\rangle^{2(1-\eta)}}d\xi_1d\xi_2$$
is bounded uniformly in $\tau,\xi\in\mathbb{R}$. For its proof, we apply Lemma \ref{key integral inequality} with
$$\begin{aligned}
&\tau+p_\epsilon(\xi_1)+p_\epsilon(\xi_2)+p_\epsilon(\xi-\xi_1-\xi_2))\\
&=\tau-\frac{3\sqrt{2}}{\epsilon^2}-\frac{\xi}{\epsilon\sqrt{2}}+\frac{1}{\epsilon^2}\Big(\langle 1+\epsilon\xi_1\rangle+\langle 1+\epsilon\xi_2\rangle+\langle 1+\epsilon(\xi-\xi_1-\xi_2)\rangle\Big).
\end{aligned}$$
Then, it follows that 
$$\begin{aligned}
\mathcal{I}_1&=\frac{\mathbf{1}_{\{|\xi|\leq\frac{3}{50\epsilon}\}}}{\langle\tau-\frac{\sqrt{2}}{\epsilon^2}+p_\epsilon(\xi+\frac{2}{\epsilon})\rangle^{1-2\eta}}\mathcal{I}^+(\tau-\tfrac{3\sqrt{2}}{\epsilon^2}-\tfrac{\xi}{\epsilon\sqrt{2}},\xi)\\
&\lesssim\frac{\mathbf{1}_{\{|\xi|\leq\frac{3}{50\epsilon}\}}}{\langle\tau-\frac{\xi}{\epsilon\sqrt{2}}-\frac{3\sqrt{2}}{\epsilon^2}+\frac{1}{\epsilon^2}\langle 3+\epsilon\xi\rangle\rangle^{1-2\eta}}\max_\pm\left\{\frac{1}{\langle \tau-\frac{\xi}{\epsilon\sqrt{2}}\pm\frac{1}{10\epsilon^2}\rangle^{1-2\eta}}\right\}\lesssim\epsilon^{2(1-2\eta)}.
\end{aligned}$$
Similarly but using that the Fourier transform of $\overline{e^{\frac{2i}{\epsilon}(x-\frac{t}{\epsilon\sqrt{2}})}A_\epsilon}$ is given by $\overline{\widetilde{A_\epsilon}(-\tau+\frac{\sqrt{2}}{\epsilon^2},-\xi-\frac{2}{\epsilon})}$ and $-\tau+\frac{\sqrt{2}}{\epsilon^2}+p_\epsilon(-\xi-\frac{2}{\epsilon})=-\tau+\frac{\sqrt{2}}{\epsilon^2}+\frac{\xi}{\epsilon\sqrt{2}}+\frac{1}{\epsilon^2}\langle 1+\epsilon\xi\rangle$, the proof of \eqref{I_j estimates} for $j=2,3$ can be respectively reduced to show that 
$$\begin{aligned}
&\sup_{\tau\in\mathbb{R},|\xi|\leq\frac{3}{50\epsilon}}\frac{1}{\langle\tau-\frac{\xi}{\epsilon\sqrt{2}}+\frac{\sqrt{2}}{\epsilon^2}+\frac{1}{\epsilon^2}\langle1-\epsilon\xi\rangle\rangle^{1-2\eta}}\mathcal{I}^-\big(-\tau+\tfrac{\xi}{\epsilon\sqrt{2}}+\tfrac{3\sqrt{2}}{\epsilon^2},\xi\big)\lesssim\epsilon^{2(1-2\eta)},\\
&\sup_{\tau\in\mathbb{R},|\xi|\leq\frac{3}{50\epsilon}}\frac{1}{\langle\tau-\frac{\xi}{\epsilon\sqrt{2}}+\frac{\sqrt{2}}{\epsilon^2}+\frac{1}{\epsilon^2}\langle1-\epsilon\xi\rangle\rangle^{1-2\eta}}\mathcal{I}^+\big(-\tau+\tfrac{\xi}{\epsilon\sqrt{2}}+\tfrac{3\sqrt{2}}{\epsilon^2},\xi\big)\lesssim\epsilon^{2(1-2\eta)},
\end{aligned}$$
but these integral inequalities immediately follow from Lemma \ref{key integral inequality}.
\end{proof}

\section{Proof of the main results}\label{sec: proof of the main results}

We are ready to prove Theorem \ref{main theorem} and Theorem \ref{main theorem'}. Indeed, by the remainder estimate (Proposition \ref{remainder estimate}), it is enough to consider the core profile $\psi_\epsilon$ in the system \eqref{system eq}.

Subtracting the NLS
\begin{equation}\label{integral NLS}
\psi_\epsilon^{\textup{(NLS)}}(t)=e^{\frac{it}{4\sqrt{2}}\partial_x^2}\psi_{\epsilon,0}-\frac{3i}{2\sqrt{2}}\int_0^t e^{\frac{i(t-t_1)}{4\sqrt{2}}\partial_x^2}\big(|\psi_\epsilon^{\textup{(NLS)}}|^2\psi_\epsilon^{\textup{(NLS)}}\big)(t_1)dt_1
\end{equation}
from the core profile equation, the difference of the two flows is written as
$$\begin{aligned}
\psi_\epsilon(t)-\psi_\epsilon^{\textup{(NLS)}}(t)&=(S_\epsilon(t)-e^{\frac{it}{4\sqrt{2}}\partial_x^2})\psi_{\epsilon,0}\\
&\quad-\frac{3i}{2}\int_0^t (S_\epsilon(t-t_1)-e^{\frac{i(t-t_1)}{4\sqrt{2}}\partial_x^2})\frac{1}{\langle 1+\epsilon D\rangle}(|\psi_\epsilon|^2\psi_\epsilon)(t_1)dt_1\\
&\quad -\frac{3i}{2}\int_0^t e^{\frac{i(t-t_1)}{4\sqrt{2}}\partial_x^2}\frac{1}{\langle 1+\epsilon D\rangle}\big(|\psi_\epsilon|^2\psi_\epsilon-|\psi_\epsilon^{\textup{(NLS)}}|^2\psi_\epsilon^{\textup{(NLS)}}\big)(t_1)dt_1\\
&\quad-\frac{3i}{2}\int_0^t e^{\frac{i(t-t_1)}{4\sqrt{2}}\partial_x^2}\left(\frac{1}{\langle 1+\epsilon D\rangle}-\frac{1}{\sqrt{2}}\right)\big(|\psi_\epsilon^{\textup{(NLS)}}|^2\psi_\epsilon^{\textup{(NLS)}}\big)(t_1)dt_1.
\end{aligned}$$
For the third term on the right hand side, we note that by the uniform bounds for nonlinear solutions (\eqref{NLS property 1} for $\psi_\epsilon^{\textup{(NLS)}}$ and \eqref{core bound} with the transference principle for $\psi_\epsilon$), 
$$\begin{aligned}
\left\||\psi_\epsilon|^2\psi_\epsilon-|\psi_\epsilon^{\textup{(NLS)}}|^2\psi_\epsilon^{\textup{(NLS)}}\right\|_{L_t^1L_x^2}&\leq T^{1/2}\left(\|\psi_\epsilon\|_{L_t^4L_x^\infty}^2+\|\psi_\epsilon^{\textup{(NLS)}}\|_{L_t^4L_x^\infty}^2\right)\|\psi_\epsilon(t)-\psi_\epsilon^{\textup{(NLS)}}\|_{C_tL_x^2}\\
&\lesssim T^{1/2} \|\psi_\epsilon(t)-\psi_\epsilon^{\textup{(NLS)}}\|_{C_tL_x^2},
\end{aligned}$$
where $T>0$ is sufficiently small but independent of $\epsilon>0$ and the time interval $[-T,T]$ in norms is omitted for convenience. Hence, we have that for $s=0,1$, 
\begin{equation}\label{main theorem proof estimate}
\begin{aligned}
\|\psi_\epsilon(t)-\psi_\epsilon^{\textup{(NLS)}}(t)\|_{C_tH_\epsilon^s}&\lesssim\|(S_\epsilon(t)-e^{\frac{it}{4\sqrt{2}}\partial_x^2})\psi_{\epsilon,0}\|_{C_tH_\epsilon^s}\\
&\quad+\int_0^T \big\|(S_\epsilon(t-t_1)-e^{\frac{i(t-t_1)}{4\sqrt{2}}\partial_x^2})(|\psi_\epsilon|^2\psi_\epsilon)(t_1)\big\|_{C_tL_x^2}dt_1\\
&\quad+\left\|\left(\frac{1}{\langle 1+\epsilon D\rangle}-\frac{1}{\sqrt{2}}\right)\big(|\psi_\epsilon^{\textup{(NLS)}}|^2\psi_\epsilon^{\textup{(NLS)}}\big)\right\|_{L_t^1H_\epsilon^s}.
\end{aligned}
\end{equation}
For Theorem \ref{main theorem}, we take $s=1$ in \eqref{main theorem proof estimate}, and recall that by the high frequency estimates (\textup{\textbf{(H2)}}, Proposition \ref{core high frequency} and \eqref{NLS property 3}), 
$$\|P_{>\delta\epsilon^{-1/3}}\psi_{\epsilon,0}\|_{H_\epsilon^1}, \|P_{>\delta\epsilon^{-1/3}}(|\psi_\epsilon|^2\psi_\epsilon)\|_{L_t^1L_x^2}, \|P_{>\delta\epsilon^{-1/3}}(|\psi_\epsilon^{\textup{(NLS)}}|^2\psi_\epsilon^{\textup{(NLS)}})\|_{L_t^1L_x^2}\to 0$$
for any $\delta>0$. Thus, by Lemma \ref{linear flow convergence}, we conclude that $\|\psi_\epsilon(t)-\psi_\epsilon^{\textup{(NLS)}}(t)\|_{C_tH_\epsilon^1}\to 0$.

For Theorem \ref{main theorem'}, we assume that \textup{\textbf{(H2')}} holds, and take $s=0$ in \eqref{main theorem proof estimate}. Then, by \eqref{high Sobolev norm core nonlinear bound} and \eqref{NLS property 2}, we have that $\|\psi_{\epsilon,0}\|_{H^s}$,  $\||\psi_\epsilon|^2\psi_\epsilon\|_{L_t^1H_x^s}$ and $\||\psi_\epsilon^{\textup{(NLS)}}|^2\psi_\epsilon^{\textup{(NLS)}}\|_{L_t^1H_x^s}$ are uniformly bounded. Thus, following the proof of Lemma \ref{linear flow convergence}, one can show that $\|\psi_\epsilon(t)-\psi_\epsilon^{\textup{(NLS)}}(t)\|_{C_tL_x^2}\lesssim\epsilon^{\frac{\epsilon}{3}}$. Finally, combining with the remainder estimate (Proposition \ref{remainder estimate}), we complete the proof.

\end{document}